\documentclass[aap,reqno,preprint]{imsart}

\RequirePackage{amsthm,amsmath,amsfonts,amssymb,mathtools}
\RequirePackage[numbers,sort&compress]{natbib}
\RequirePackage[hidelinks]{hyperref}

\startlocaldefs
\usepackage{microtype}
\usepackage{enumitem}
\newtheorem{theorem}{\textsc{Theorem}}[section]
\newtheorem{proposition}[theorem]{\textsc{Proposition}}
\newtheorem{lemma}[theorem]{\textsc{Lemma}}
\numberwithin{equation}{section}
\renewcommand{\proofname}{\textsc{Proof}}

\newcommand{\E}{\mathbb E}
\newcommand{\R}{\mathbb R}
\newcommand{\Cov}{\operatorname{Cov}}
\newcommand{\dd}{\mathop{}\!\mathrm d}
\DeclareMathOperator*{\argmax}{arg\,max}
\DeclareMathOperator{\id}{id_\R}
\DeclareMathOperator{\Var}{Var}
\endlocaldefs

\begin{document}
\begin{frontmatter}

	\title{The one-period Kyle model has one
		equilibrium}

	\runtitle{Uniqueness in the Gaussian Kyle Model}

	\begin{aug}
		\author[B]{\fnms{Paulo~K.}~\snm{Monteiro}%
			\ead[label=e2]{paulo.klinger@fgv.br}}
		\author[C]{\fnms{Rabee}~\snm{Tourky}%
			\ead[label=e3]{rabee.tourky@anu.edu.au}}

		\address[B]{FGV EPGE Brazilian School of Economics and Finance,
			Rio de Janeiro, Brazil%
			\printead[presep={,\ }]{e2}}
		\address[C]{Research School of Economics,
			Australian National University,
			Canberra ACT 2601, Australia%
			\printead[presep={,\ }]{e3}}
	\end{aug}

	\begin{abstract}
		Let $V$ and $U$ be independent standard normal random variables. For
		each Borel-measurable function $\phi\colon\R\to\R$, let
		$P_\phi\colon\R\to\R$ be a Borel version of the inverse regression
		$y\mapsto\E[V\mid\phi(V)+U=y]$. We prove the rigidity result
		\[
			\phi(v)\in\argmax_{x\in\R}
			\E\!\left[(v-P_\phi(x+U))x\right]
			\qquad\text{for every }v\in\R
		\]
		if and only if $\phi=\id$, the identity function.

		The rigidity result implies that the
		one-period Gaussian Kyle~(1985) insider-trading model has a unique
		Borel-measurable equilibrium strategy, namely Kyle's affine strategy.
		Building on preliminary results of \citeauthor*{MMT17}~(2017), the
		proof establishes that equilibrium attains a sharp
		universal upper bound on this noise traders' loss and that any strategy
		attaining the bound must be affine almost everywhere.
	\end{abstract}

	\begin{keyword}[class=MSC]
		\kwdgroup[type=primary]{\kwd{60E15}}
		\kwdgroup[type=secondary]{\kwd{60G35}\kwd{62C10}\kwd{91G15}}
	\end{keyword}

	\begin{keyword}[class=JEL] \kwd{\emph{JEL Classification}:~}
		~\kwdgroup[type=primary]{\kwd{G14}}
		\kwdgroup[type=secondary]{\kwd{C62}\kwd{D82}}
	\end{keyword}

	\begin{keyword}
		\kwd{Kyle model}
		\kwd{noise traders}
		\kwd{informed trader}
		\kwd{equilibrium uniqueness}
		\kwd{Gaussian conditional expectation}
		\kwd{inverse regression}
		\kwd{exchangeable pairs}
		\kwd{monotone functions}
	\end{keyword}

\end{frontmatter}

\setcounter{tocdepth}{2}
\tableofcontents
\clearpage

\section{Introduction}
\label{sec:intro}

The one-period Gaussian model in \citet{Kyle85} is a noisy signalling
model of insider trading. An informed trader observes the asset value
\[
	V\sim\mathcal N(\mu,\tau^2),\qquad \tau>0\,,
\]
and submits demand $\phi(V)$, where
$\phi\colon\R\to\R$ is a Borel-measurable \emph{strategy}. Market
makers observe total demand
\[
	Y_\phi=\phi(V)+U,\qquad
	U\perp V,\qquad
	U\sim\mathcal N(0,\sigma^2),\quad \sigma>0\,,
\]
where $U$ is noise traders' demand, and set the price $P(Y_\phi)$,
where $P\colon\R\to\R$ is a Borel-measurable \emph{pricing rule}.

A pair $(\phi,P)$ is an \emph{equilibrium} if the following two
conditions hold:
\begin{enumerate}
	\item \emph{Profit maximisation.}\label{item:profit-max}
	      For every $v\in\R$,
	      \[
		      \phi(v)\in
		      \argmax_{x\in\R}
		      \E\!\left[(v-P(x+U))x\right]\,.
	      \]

	\item \emph{Market efficiency.}
	      \[
		      P(Y_\phi)=\E[V\mid Y_\phi]
		      \qquad\text{almost surely}\,.
	      \]
\end{enumerate}
The first condition is expected-profit maximisation by a risk-neutral
informed trader; the second is competitive risk-neutral pricing by the
inverse regression.

If an equilibrium strategy $\phi$ is affine, then $(V,Y_\phi)$ is
jointly Gaussian. The equilibrium conditions and Gaussian regression
yield \emph{Kyle's affine equilibrium}:
\begin{align}
	\phi(v)
	 & =\frac{\sigma}{\tau}(v-\mu)
	 &                                 & \text{for every }v\in\R\,,
	\label{eq:kyle-strategy}                                        \\
	P(Y_\phi)
	 & =\mu+\frac{\tau}{2\sigma}Y_\phi
	 &                                 & \text{almost surely}\,.
	\label{eq:kyle-price}
\end{align}
This calculation does not rule out non-affine equilibria. We prove
that Kyle's affine equilibrium is unique:\footnote{Equilibrium pricing
	rules are identified as equivalence classes modulo
	Lebesgue-almost-everywhere equality. If profit maximisation is imposed
	only for Lebesgue-almost every $v$, the equilibrium strategy is
	likewise unique modulo Lebesgue-null sets; see
	Appendix~\ref{subsec:ae-maximisation}.}
\begin{theorem}
	\label{thm:kyle}
	A pair $(\phi,P)$ is an equilibrium if and only if
	\eqref{eq:kyle-strategy} and \eqref{eq:kyle-price} hold.
\end{theorem}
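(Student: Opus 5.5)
By the affine change of variables $V\mapsto(V-\mu)/\tau$, $U\mapsto U/\sigma$, $\phi\mapsto\phi/\sigma$ and $P\mapsto(P-\mu)/\tau$, I would first reduce to the standard case $\mu=0$, $\tau=\sigma=1$. In that case \eqref{eq:kyle-strategy} reads $\phi=\id$ and \eqref{eq:kyle-price} reads $P(y)=y/2$. Market efficiency forces $P=P_\phi$ up to a $Y_\phi$-null set. Since $Y_\phi$ has an everywhere positive density (it is a Gaussian convolution), this means $P=P_\phi$ Lebesgue-a.e. The map $x\mapsto\E[P(x+U)]$ only sees $P$ modulo null sets, so profit maximisation against $P$ is exactly the rigidity condition in the abstract. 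The theorem therefore reduces to: $\phi$ satisfies the rigidity condition iff $\phi=\id$.

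\emph{Sufficiency.} If $\phi=\id$, Gaussian regression gives $P_\phi(y)=y/2$. The trader's objective is then $vx-x^2/2$, which is maximised uniquely at $x=v$.

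\emph{Necessity.} Write $m(x)=\E[P(x+U)]$. This function is real-analytic, being a Gaussian convolution of $P$; square integrability of $P(Y_\phi)$ together with the positive density of $Y_\phi$ keeps the integrals finite. The objective $vx-x\,m(x)$ has increasing differences in $(v,x)$, so the standard single-crossing argument shows $\phi$ is nondecreasing. Next let $\Pi(v)=\max_x\{vx-x\,m(x)\}$. It is a supremum of affine functions of $v$, hence convex. By the envelope theorem $\Pi'(v)=\phi(v)$ wherever $\phi$ is continuous, and the first-order condition $v=m(\phi(v))+\phi(v)m'(\phi(v))$ holds for every $v$.

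The central identity is obtained by taking expectations of the zero-profit condition. Since $\E[(V-P(Y_\phi))Y_\phi]=0$, the trader's expected profit equals the noise traders' loss:
\[
	\E[\Pi(V)]=\E[(V-P(Y_\phi))\phi(V)]=\E[U\,P(Y_\phi)]=\E[m'(\phi(V))],
\]
where the last equality is Gaussian integration by parts in $U$, conditionally on $V$.

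The plan is then to prove a \emph{sharp universal upper bound} $\E[U P_\phi(Y_\phi)]\le\tfrac12$ over all monotone strategies. I would attack this by combining three ingredients: the first-order condition, the Cauchy--Schwarz bound $\Var P(Y_\phi)\le\Var V=1$, and the identity $\E[VP]=\E[P^2]$. A complementary lower bound $\E[\Pi(V)]\ge\tfrac12$ should come from convexity of $\Pi$, testing the optimality condition against suitable deviations such as $x=\phi(v)+\varepsilon$ with Gaussian averaging; here I would use the exchangeable-pair structure of $(V,Y_\phi)$ in the spirit of \citet{MMT17}. The two bounds force equality in every inequality used, and I would read off the equality cases: equality in Cauchy--Schwarz-type steps makes $P(Y_\phi)$ an affine function of $Y_\phi$, and hence $m$ affine. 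The first-order condition then makes $\phi$ affine a.e. Monotonicity, together with the requirement that maximisation hold for every $v$ rather than almost every $v$, upgrades this to affine everywhere, and the affine computation pins the slope to $1$.

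\emph{Main obstacle.} I expect the sharp upper bound and its equality case to be the hard part. For a general monotone, possibly discontinuous, $\phi$ the regression $P_\phi$ is not explicit. Finding the right inequality, one that is tight exactly at Kyle's strategy and whose equality case forces affineness, is where the real work lies. My first attempt would be a Stein/Poincaré-type variance bound applied to $P_\phi$ along the exchangeable pair $(V,Y_\phi)$, with equality characterising linear functions.
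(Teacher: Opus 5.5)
Your architecture matches the paper's: normalise, identify the informed trader's expected profit with the noise traders' loss $\Gamma=\E[UP(Y_\phi)]=\E[m'(\phi(V))]$, and squeeze $\Gamma$ between a universal upper bound $\tfrac12$ and an equilibrium lower bound $\tfrac12$. The gap is that the step carrying the proof, $\Gamma\ge\tfrac12$ at equilibrium, is never supplied. ``Convexity of $\Pi$, deviations $x=\phi(v)+\varepsilon$ with Gaussian averaging, exchangeable pairs'' names no inequality that produces the constant $\tfrac12$. Without that bound, nothing distinguishes an equilibrium from an arbitrary monotone strategy.

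You also have the difficulty reversed. The upper bound needs neither the first-order condition nor monotonicity, and it holds for every Borel $\phi$. Set $M=\E[V\mid Y_\phi]$ and $X=2M-V$. Your identity $\E[VM]=\E[M^2]$ gives $\E[X^2]=1$, and $\E[XU]=2\Gamma$, so $0\le\E[(X-U)^2]=2-4\Gamma$. Equality forces $Y_\phi-2P(Y_\phi)=\phi(V)-V$ almost surely, and a Fubini argument makes $\phi(v)-v$ constant almost everywhere. No Stein or Poincar\'e bound for $P_\phi$ is needed there, and $Y_\phi$ is not Gaussian anyway.

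The missing idea is where the first-order condition actually enters. Substituting $v=m(\phi(v))+\phi(v)m'(\phi(v))$ into $\Pi(v)=\phi(v)\bigl(v-m(\phi(v))\bigr)$ gives $\Pi=\phi^2q$ with $q:=m'\circ\phi$. Convexity gives $\Pi'=\phi$ almost everywhere. One checks that $m'>0$ via $P'(y)=\Cov_{\pi_y}(\id,\phi)>0$, where $\pi_y$ is the posterior law of $V$ given $Y_\phi=y$ and the first-order condition makes $\phi$ strictly increasing. Hence $h:=\operatorname{sgn}(\phi)\sqrt{2\Pi}$ is locally Lipschitz and increasing, with $h'=1/\sqrt{2q}$ almost everywhere. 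The Gaussian lower variance bound $(\E[h'(V)])^2\le\Var(h(V))$ (Cacoullos, applied to truncations of $h$) and Cauchy--Schwarz then give
\[
1\le\E[h'(V)]\,\E\bigl[1/h'(V)\bigr]\le\sqrt{\E[h(V)^2]}\,\sqrt{\E[2q(V)]}=2\sqrt{\E[\Pi(V)]\,\E[q(V)]}=2\Gamma .
\]
This step uses both of your expressions for $\Gamma$ simultaneously; that is the real content of the proof.

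Separately, your derivation of $\E[\Pi(V)]=\E[UP(Y_\phi)]$ from $\E[(V-P(Y_\phi))Y_\phi]=0$ is only formal. It presupposes that $\phi(V)V$ and $\phi(V)P(Y_\phi)$ are integrable, which is not available for a Borel strategy; even finiteness of $\E[\Pi(V)]$ is not known a priori. The paper instead writes $2\Gamma=\E\bigl[(\phi(V^{(0)})-\phi(V^{(1)}))(V^{(0)}-V^{(1)})\bigr]$ using a conditionally independent replica $V^{(1)}$. It then uses truncations $\phi_n$ with monotone convergence, exploiting monotonicity of $\phi$ and $\Pi\ge0$, to obtain both $\Pi(V)\in L^1$ and the identity.
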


Empirical analogues of the coefficient
$\lambda=\tau/(2\sigma)$ in \eqref{eq:kyle-price}, known as
\emph{Kyle's lambda}, are widely used to measure price impact and
illiquidity \citep{Hasbrouck91,BS96,Amihud02}. Related measures are
used to study adverse selection and whether prices reveal informed
trading \citep{GH88,CDF15}. Their structural interpretation in the
Gaussian Kyle model requires a unique equilibrium prediction. If a
non-affine equilibrium existed, it would induce a non-jointly-Gaussian
distribution of the observables
$\bigl(P(Y_\phi),Y_\phi\bigr)$, unlike Kyle's affine equilibrium.
Hence, the distribution of observables would depend on equilibrium
selection, potentially rendering the model incomplete or partially
identified \citep{Tamer03,CilibertoTamer09}.
Theorem~\ref{thm:kyle} rules out this ambiguity: the model's
prediction for Kyle's lambda is independent of equilibrium selection.

Related uniqueness results concern modified models. \citet{RV94}
allow general value distributions but let the insider observe noise
demand before choosing demand. In continuous time, \citet{Back92}
establishes uniqueness of the pricing rule within a specified class;
see \citet{Cetin18}. With uniformly distributed asset value and noise
demand and a broad class of trading penalties, \citet*{CCG22} prove
uniqueness, up to their notion of indistinguishability, within a
specified class of nondecreasing strategies. By contrast,
\citet{CE00} exhibit a unique single-auction equilibrium with a
strongly nonlinear pricing rule for a Bernoulli value distribution.
The conclusion of Theorem~\ref{thm:kyle} depends
substantively on the Gaussian specification.

For the frictionless one-period Gaussian model,
\citet*{BKL05,BKL13} introduce a complex-analytic approach to
uniqueness, which \citet*{MMT17} develop further. They give an example
showing that the first-order conditions alone do not suffice to
establish uniqueness, and prove uniqueness when the equilibrium
strategy agrees locally with a function admitting a unique analytic
continuation. To our knowledge, uniqueness remained open for general
Borel-measurable strategies. Our theorem resolves the question without
modifying the model or imposing regularity beyond Borel measurability
of the strategy and pricing rule.

Our proof has three stages, each combining an economic insight with a
brief probabilistic argument.
\begin{enumerate}[label=\roman*.]
	\item \emph{Market-efficiency bounds.}
	      Market efficiency imposes sharp universal upper and lower
	      bounds on the noise traders' total expected loss over all
	      Borel-measurable insider strategies. Either bound is attained
	      if and only if the strategy agrees Lebesgue-almost everywhere
	      with the corresponding affine extremiser.

	\item \emph{Accounting identity for monotone strategies.}
	      Any nondecreasing strategy with nonnegative conditional
	      expected profit has finite ex ante expected profit equal to
	      the noise traders' total expected loss.

	\item \emph{Saturation at equilibrium.}
	      Pointwise profit maximisation makes the equilibrium strategy
	      strictly increasing and conditional expected profit
	      nonnegative, so the accounting identity applies. Their common
	      value attains the sharp upper bound from (i).
\end{enumerate}
Monotonicity, pointwise profit maximisation, and Gaussian regression
then yield uniqueness. The proof uses conditional resampling, a
classical Gaussian variance bound, supermodularity, convex analysis,
and lattice arguments.

The remainder of the paper has the following organization. The
next section records seven auxiliary lemmas, with their
provenance noted before the relevant statements. The reader may
proceed directly to Section~\ref{sec:quantities}, skipping these
preliminaries. Section~\ref{sec:quantities} defines noise-trader loss
and informed trader profit and records their basic properties.
Sections~\ref{sec:covariance-bound}--\ref{sec:covariance-saturation}
then establish, respectively, the sharp universal bounds and their
equality cases, the accounting identity, and saturation at
equilibrium. Section~\ref{sec:rigidity} proves the rigidity theorem
and Theorem~\ref{thm:kyle}.

\section{Preliminaries}

\label{sec:preliminaries}

This section fixes notation and records seven auxiliary lemmas used
below. Appendix~\ref{app:proofs-preliminaries} gives proofs for
completeness.

Let $U$ and $V$ be independent standard normal random variables. Fix a
Borel-measurable function $\phi\colon\R\to\R$ and set
\[
	Y=\phi(V)+U\,.
\]
We assume no further regularity or moment condition on $\phi$; in
particular, $\phi(V)$ need not be integrable. Throughout, we refer to a version of
$y\mapsto\E[V\mid Y=y]$ as the \emph{inverse regression}, and to the
random variable $\E[V\mid Y]$ as the \emph{posterior mean}. We denote by
$\id\colon\R\to\R$ the identity map.

\subsection{Densities of
	\texorpdfstring{$(V,Y)$ and $Y$}{(V,Y) and Y}}
\label{subsec:densities}
Let
\[
	\varrho(t):=\frac{1}{\sqrt{2\pi}}e^{-t^2/2}\qquad\text{for every }t\in\R\,,
\]
denote the standard Gaussian density.

\begin{lemma}
	\label{lem:densities}
	The joint law of $(V,Y)$ has density
	\[
		(v,y)\longmapsto\varrho(v)\varrho\bigl(y-\phi(v)\bigr)\,,
	\]
	and the law of $Y$ has the everywhere positive density
	\[
		p_Y(y):=\int_\R
		\varrho(v)\varrho\bigl(y-\phi(v)\bigr)\,\dd v
		\qquad\text{for every }y\in\R\,.
	\]
\end{lemma}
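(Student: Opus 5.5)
The plan is to compute the joint law of $(V,Y)$ by conditioning on $V$ and using independence, then obtain the marginal of $Y$ by Fubini/Tonelli, and finally check positivity.

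For the joint law, fix Borel sets $A,B\subseteq\R$. Since $U$ is independent of $V$ and $\phi$ is Borel, the freezing lemma for conditional expectations (equivalently, Fubini applied to the product law $\varrho(v)\varrho(u)\,\dd v\,\dd u$ of $(V,U)$) gives
\[
	\Pr(V\in A,\,Y\in B)
	=\int_A \varrho(v)\,\Pr\bigl(\phi(v)+U\in B\bigr)\,\dd v
	=\int_A\int_B \varrho(v)\varrho\bigl(y-\phi(v)\bigr)\,\dd y\,\dd v .
\]
In the inner integral I substitute $y=\phi(v)+u$, which is just a translation for each fixed $v$. The integrand $(v,y)\mapsto\varrho(v)\varrho(y-\phi(v))$ is jointly Borel, because it is a composition of Borel maps, and it is nonnegative. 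Tonelli therefore lets us read the display as the integral over $A\times B$ of this function. Rectangles form a $\pi$-system generating $\mathcal B(\R^2)$, so the two measures agree on all Borel sets, and the function is a density of $(V,Y)$.

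For the marginal, I integrate out $v$ via Tonelli. This gives $\Pr(Y\in B)=\int_B p_Y(y)\,\dd y$, and the integrand $p_Y$ is Borel in $y$.

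Positivity is the only point needing care, since no moment condition is imposed on $\phi$. For every $y$, the integrand $\varrho(v)\varrho(y-\phi(v))$ is strictly positive for every $v$, because $\varrho>0$ everywhere. The integral of a strictly positive measurable function over a set of positive Lebesgue measure is strictly positive, so $p_Y(y)>0$. On the other side, $p_Y(y)\le \varrho(0)\int\varrho=1/\sqrt{2\pi}$, so $p_Y$ is finite everywhere, and it is an everywhere-defined density rather than merely an a.e. one. I expect no genuine obstacle. The main subtlety is ensuring joint measurability to justify Tonelli, and noting that positivity holds pointwise, not just almost everywhere, without any integrability of $\phi(V)$.
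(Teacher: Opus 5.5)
Your proposal is correct and follows essentially the paper's argument: you translate $y=\phi(v)+u$ at fixed $v$, apply Tonelli to identify the joint density, integrate out $v$, and obtain $0<p_Y(y)\leq(2\pi)^{-1/2}$ from the everywhere-positive integrand, which is bounded by $(2\pi)^{-1/2}\varrho(v)$. The only cosmetic difference is that the paper tests against an arbitrary bounded Borel $g(v,y)$ directly, so it does not need your extension from rectangles via the $\pi$-system argument.
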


\subsection{Continuous version of the inverse regression}
\label{subsec:continuous-version}

The inverse-regression and Gaussian-smoothing results in this subsection
and the next were established in \citep{MMT17}, where substantially
stronger complex-analytic regularity is proved.

Define $P\colon\R\to\R$ by
\[
	P(y):=\frac{1}{p_Y(y)}\int_\R
	v\varrho(v)\varrho\bigl(y-\phi(v)\bigr)\,\dd v,
	\qquad\text{for every }y\in\R\,.
\]
The integral is absolutely convergent because
$\varrho\leq(2\pi)^{-1/2}$ and $V\in L^1$.
Lemma~\ref{lem:densities} gives $p_Y(y)\in(0,\infty)$. Thus, $P$ is
well defined.

\begin{lemma}
	\label{lem:P-properties}
	The function $P$ is continuous,
	\[
		P(Y)=\E[V\mid Y]\qquad\text{almost surely}\,,
	\]
	and $P(Y)\in L^2$.
\end{lemma}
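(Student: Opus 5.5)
We prove the three claims in turn: continuity of $P$ via dominated convergence applied separately to numerator and denominator, the identification $P(Y)=\E[V\mid Y]$ via the defining property of conditional expectation together with Lemma~\ref{lem:densities}, and square integrability via conditional Jensen. Write
\[
	N(y):=\int_\R v\varrho(v)\varrho\bigl(y-\phi(v)\bigr)\,\dd v\,,
\]
so that $P=N/p_Y$.

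\emph{Continuity.} For each fixed $v$, the map $y\mapsto\varrho(y-\phi(v))$ is continuous. Both integrands are dominated, uniformly in $y$, by the integrable functions $(2\pi)^{-1/2}\varrho(v)$ and $(2\pi)^{-1/2}|v|\varrho(v)$ respectively. Dominated convergence therefore shows that $p_Y$ and $N$ are continuous. Since $p_Y>0$ everywhere by Lemma~\ref{lem:densities}, the quotient $P$ is continuous. No regularity of $\phi$ beyond measurability is needed, because the $y$-dependence enters only through the smooth kernel $\varrho$.

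\emph{Identification with the posterior mean.} The function $P$ is Borel, being continuous, so $P(Y)$ is $\sigma(Y)$-measurable. It remains to check that $\E[V\mathbf 1_B(Y)]=\E[P(Y)\mathbf 1_B(Y)]$ for every Borel set $B$. By Lemma~\ref{lem:densities} and Fubini--Tonelli, which is justified since $|v|\varrho(v)\varrho(y-\phi(v))$ is integrable on $\R^2$ with integral $\E|V|<\infty$, we obtain
\[
	\E[V\mathbf 1_B(Y)]=\int_B N(y)\,\dd y=\int_B P(y)p_Y(y)\,\dd y=\E[P(Y)\mathbf 1_B(Y)]\,.
\]
In particular $P(Y)\in L^1$, as one sees by taking $B=\{P>0\}$ and $B=\{P<0\}$.

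\emph{Square integrability.} Conditional Jensen gives $P(Y)^2=(\E[V\mid Y])^2\le\E[V^2\mid Y]$ almost surely. Taking expectations yields $\E[P(Y)^2]\le\E[V^2]=1$.

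The only mild obstacle is bookkeeping: we must confirm that every integral is absolutely convergent before invoking Fubini, and we must avoid assuming any integrability of $\phi(V)$. This works because $\phi$ appears only inside the bounded kernel $\varrho$.
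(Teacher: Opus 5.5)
Your proposal is correct and follows essentially the same route as the paper: dominated convergence for the numerator and for $p_Y$, Fubini against the joint density from Lemma~\ref{lem:densities} to verify the defining property of $\E[V\mid Y]$, and conditional Jensen for the $L^2$ bound. The only differences are cosmetic. You test against indicators $\mathbf 1_B$ and obtain $P(Y)\in L^1$ by splitting into $\{P>0\}$ and $\{P<0\}$, whereas the paper tests against bounded Borel $g$ and bounds $\E|P(Y)|$ by $\int_\R q(y)\,\dd y=\E|V|$, with $q(y)=\int_\R |v|\varrho(v)\varrho(y-\phi(v))\,\dd v$.
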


For each $y\in\R$, define the probability measure
\[
	\pi_y(\dd v):=
	\frac{\varrho(v)\varrho\bigl(y-\phi(v)\bigr)}{p_Y(y)}\,\dd v\,.
\]
By Lemma~\ref{lem:densities} and Bayes' formula, $y\mapsto\pi_y$ is a
version of the conditional law of $V$ given $Y=y$. Moreover,
\[
	P(y)=\int_\R v\,\pi_y(\dd v),\qquad\text{for every }y\in\R\,.
\]

\subsection{Gaussian smoothing of the inverse regression}
\label{subsec:smoothing}

The following lemma is the principal regularity input used in the
remainder of the paper.

Define the Gaussian smoothing of the continuous version by
\begin{equation}
	F(x):=\E[P(x+U)],\qquad\text{for every }x\in\R\,.
	\label{eq:F-definition}
\end{equation}

\begin{lemma}
	\label{lem:F-properties}
	The expectation in \eqref{eq:F-definition} is finite for every
	$x\in\R$, and $F\colon\R\to\R$ is continuously differentiable. For
	every $x\in\R$,
	\begin{equation}
		F(x)=\int_\R P(y)\varrho(y-x)\,\dd y\,,
		\label{eq:F-convolution}
	\end{equation}
	and
	\begin{equation}
		\begin{aligned}
			F'(x)
			 & =\int_\R P(y)(y-x)\varrho(y-x)\,\dd y \\
			 & =\int_\R uP(x+u)\varrho(u)\,\dd u\,.
		\end{aligned}
		\label{eq:F-derivative}
	\end{equation}
	The integrals in \eqref{eq:F-convolution} and
	\eqref{eq:F-derivative} are absolutely convergent. For every Borel
	version $\widetilde P$ of $\E[V\mid Y]$ and every $x\in\R$,
	\[
		\widetilde P(x+U)\in L^1,\qquad
		\E[\widetilde P(x+U)]=F(x)\,.
	\]
	Moreover,
	\[
		F(\phi(V))
		=\E[\E[V\mid Y]\mid V]
		\qquad\text{almost surely}\,.
	\]
\end{lemma}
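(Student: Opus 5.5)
We need to prove Lemma~\ref{lem:F-properties}: finiteness of $F$, the $C^1$ property, the two integral formulas, the version-independence claim, and the identity $F(\phi(V))=\E[\E[V\mid Y]\mid V]$.

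\emph{Integrability.} The key quantitative input is a Gaussian density-ratio bound. Since $P(Y)\in L^2$ by Lemma~\ref{lem:P-properties}, we have $\int P(y)^2p_Y(y)\,\dd y<\infty$. The plan is to compare $\varrho(y-x)$ with $p_Y(y)$. Using $\varrho(y-x)=\varrho(y-\phi(v))\exp\bigl((y-\phi(v))^2/2-(y-x)^2/2\bigr)$ is awkward when $\phi$ is unbounded, so I would proceed differently. Write $\E[|P(x+U)|]$ and apply Cauchy--Schwarz with respect to the law of $Y$:
\[
\int|P(y)|\varrho(y-x)\,\dd y\le \Bigl(\int P^2p_Y\Bigr)^{1/2}\Bigl(\int \varrho(y-x)^2/p_Y(y)\,\dd y\Bigr)^{1/2}.
\]
The second factor needs a lower bound on $p_Y$. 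Choose a bounded interval $A$ with $\Pr(V\in A,|\phi(V)|\le M)\ge c>0$; such an interval exists since $\phi$ is finite-valued. Then, for all $y$,
\[
p_Y(y)\ge c\inf_{|a|\le M}\varrho(y-a)\ge c'\,e^{-y^2/2-M|y|}.
\]
Consequently $\varrho(y-x)^2/p_Y(y)\lesssim e^{-y^2+2xy-x^2+y^2/2+M|y|}$, which is integrable and locally uniformly so in $x$. The same estimate with an extra polynomial factor $|y-x|$ handles the derivative integrand. This gives finiteness and absolute convergence, together with local domination in $x$.

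\emph{Formulas and regularity.} Equation~\eqref{eq:F-convolution} is a change of variables $y=x+u$. For differentiability, note $\partial_x\varrho(y-x)=(y-x)\varrho(y-x)$. The local-in-$x$ domination $\sup_{|x-x_0|\le1}|P(y)|(1+|y-x|)\varrho(y-x)\le g(y)$, with $g$ integrable by the bound above (take the sup inside the exponent), justifies differentiation under the integral and yields the first line of \eqref{eq:F-derivative}; substitution gives the second line. Continuity of $F'$ follows from dominated convergence with the same $g$. The step I expect to be the main obstacle is making this domination uniform in a neighbourhood of $x_0$; the explicit exponential bound should settle it.

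\emph{Version independence and the final identity.} Any Borel version $\widetilde P$ equals $P$ except on a set $N$ with $\Pr(Y\in N)=0$. Since $p_Y>0$ everywhere, $N$ is Lebesgue-null, and because $x+U$ has a density, $\widetilde P(x+U)=P(x+U)$ almost surely. This gives both $\widetilde P(x+U)\in L^1$ and $\E[\widetilde P(x+U)]=F(x)$. For the last claim, $\E[V\mid Y]=P(\phi(V)+U)$ almost surely with $U$ independent of $V$. By the freezing (substitution) lemma for conditional expectations, $\E[P(\phi(V)+U)\mid V]=F(\phi(V))$; the integrability required there follows from $P(Y)\in L^2$ and the finiteness of $F$ at every point.
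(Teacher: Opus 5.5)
Your proof is correct, but the integrability and domination steps follow a different route from the paper.

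You use the already-established bound $\E[P(Y)^2]\le1$ and Cauchy--Schwarz against $p_Y$. This reduces everything to $\int\varrho(y-x)^2/p_Y(y)\,\dd y<\infty$. A single crude lower bound $p_Y(y)\ge c'e^{-y^2/2-M|y|}$ then suffices, obtained from any set of positive probability on which $|\phi(V)|\le M$, because $\varrho(y-x)^2$ decays like $e^{-y^2}$. Taking the supremum over $|x-x_0|\le1$ inside the exponent gives, in the same stroke, the locally uniform domination needed to differentiate under the integral and to get continuity of $F'$.

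The paper instead works only with first moments. By Tonelli it bounds $\E|P(x+U)|\le\E[|V|K_x(\phi(V))]$ with $K_x(a)=\int\varrho(y-x)\varrho(y-a)/p_Y(y)\,\dd y$. It then shows $K_x$ is $\nu$-essentially bounded using a sharper lower bound on $p_Y$, based on the mass of $\nu$ on an interval $[\alpha,\beta]$ to one side of $x$, together with the exact Gaussian integral $e^{(x-d)(a-d)}$. This needs a case analysis. The derivative domination is handled separately, by reducing to shifted integrability at $\pm A$ via $e^{\pm Ay}\varrho(y)=e^{A^2/2}\varrho(y\mp A)$.

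Your argument is shorter and treats $F$ and $F'$ uniformly. However, your crude lower bound on $p_Y$ is enough only because you have a moment of $P(Y)$ of order $p>1$. With merely $P(Y)\in L^1$ you would need $\varrho(y-x)/p_Y(y)$ bounded, and your bound gives only $e^{xy-x^2/2+M|y|}$, which is unbounded. The paper's kernel estimate works with $V\in L^1$ alone, at the cost of the finer, side-dependent lower bound on $p_Y$.

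Your version-independence and freezing-lemma steps coincide with the paper's.
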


\subsection{Consequences of pointwise maximisation}
\label{subsec:first-order-conditions}

The next lemma supplies the monotonicity of $\phi$ used in
Proposition~\ref{prop:balance},
together with the first-order and
convex-analytic identities used in
Proposition~\ref{prop:covariance-saturation}.
Its first-order and monotonicity
conclusions were established in \citep{MMT17}. We formulate them in terms of the value function $W$. The
convex-analytic facts used in the proof are standard and may be found
in \citet{Rockafellar70}.

Define
\begin{equation*}
	W(v):=\sup_{x\in\R}x\bigl(v-F(x)\bigr),\qquad\text{for every }v\in\R\,,
\end{equation*}
where the value $+\infty$ is allowed. The function $W$ is nonnegative
because $x=0$ is admissible, and convex because it is a pointwise
supremum of affine functions of $v$.

\begin{lemma}
	\label{lem:foc}
	If
	\[
		\phi(v)\in\argmax_{x\in\R}x\bigl(v-F(x)\bigr)
		\qquad\text{for every }v\in\R\,,
	\]
	then $W$ is real-valued and, for every $v\in\R$,
	\begin{align}
		F(\phi(v))+\phi(v)F'(\phi(v)) & =v
		\label{eq:foc}                                           \\
		W(v)                          & =\phi(v)^2F'(\phi(v))\,.
		\label{eq:W-identity}
	\end{align}
	Moreover, $W$ is locally absolutely continuous and
	\begin{equation}
		W'(v)=\phi(v)
		\qquad\text{for Lebesgue-almost every }v\in\R\,.
		\label{eq:W-prime}
	\end{equation}
	The map $\phi$ is strictly increasing.
\end{lemma}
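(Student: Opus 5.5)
The plan is to derive everything from the pointwise maximisation of $g_v(x):=x\bigl(v-F(x)\bigr)$, using only that $F$ is $C^1$ (Lemma~\ref{lem:F-properties}).

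\emph{Step 1: first-order condition and value identity.} Since $\phi(v)$ is an interior maximiser of the $C^1$ function $g_v$, we have $g_v'(\phi(v))=0$. This is $v-F(\phi(v))-\phi(v)F'(\phi(v))=0$, which is \eqref{eq:foc}. It follows that $W(v)=g_v(\phi(v))$ is finite, and
\[
W(v)=\phi(v)\bigl(v-F(\phi(v))\bigr)=\phi(v)\cdot\phi(v)F'(\phi(v)),
\]
which is \eqref{eq:W-identity}. In particular $W$ is real-valued everywhere.

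\emph{Step 2: convex-analytic structure.} $W$ is a real-valued convex function on $\R$, so it is locally Lipschitz, hence locally absolutely continuous, and differentiable off a countable set. For every $v$ and $w$,
\[
W(w)\ge \phi(v)\bigl(w-F(\phi(v))\bigr)=W(v)+\phi(v)(w-v),
\]
so $\phi(v)\in\partial W(v)$. At points of differentiability the subdifferential is the singleton $\{W'(v)\}$, which gives \eqref{eq:W-prime}. Monotonicity of the subdifferential of a convex function also shows that $\phi$ is nondecreasing.

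\emph{Step 3: strictness.} Suppose $v<w$ and $\phi(v)=\phi(w)=x$. Applying \eqref{eq:foc} at both points gives $F(x)+xF'(x)=v$ and $F(x)+xF'(x)=w$, so $v=w$, a contradiction. Hence $\phi$ is injective, and together with Step 2 it is strictly increasing.

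The argument is elementary once $F\in C^1$ is available. The only delicate points are the finiteness of $W$, which comes from attainment of the supremum, and the strict monotonicity, which the first-order condition settles immediately. I therefore expect no genuine obstacle; the main care is in checking that the subgradient inequality is valid for all $w$, which holds because $W$ is defined as a supremum over all $x$.
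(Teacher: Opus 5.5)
Your proposal is correct and follows essentially the same route as the paper's proof. Both arguments use the first-order condition for the $C^1$ map $x\mapsto x\bigl(v-F(x)\bigr)$, the supporting-line inequality $W(w)\geq W(v)+\phi(v)(w-v)$ to obtain local absolute continuity, $W'=\phi$ almost everywhere, and monotonicity of $\phi$, and then \eqref{eq:foc} to rule out $\phi(v_1)=\phi(v_2)$ for $v_1\neq v_2$.
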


\subsection{Monotonicity of the inverse regression and its smoothing}
\label{subsec:monotonicity}

For a probability measure $\pi$ on $\R$ and $f\in L^1(\pi)$, write
\[
	\E_\pi[f]:=\int_\R f\,\dd\pi\,.
\]
For $f,g\in L^2(\pi)$, write
\[
	\Cov_\pi(f,g):=\E_\pi[fg]-\E_\pi[f]\,\E_\pi[g]\,.
\]

Under the hypothesis of Lemma~\ref{lem:foc}, the strategy $\phi$ is
strictly increasing. The next lemma shows that, under the same
hypothesis, the continuous inverse regression $P$ and its Gaussian
smoothing $F$ are also strictly increasing; in particular, it supplies
the strict positivity of $F'$ used in
Proposition~\ref{prop:covariance-saturation}. Its proof uses the
score--covariance identity for exponential families \citep{Brown86}.

\begin{lemma}
	\label{lem:posterior}
	Under the hypothesis of Lemma~\ref{lem:foc}, for every $y\in\R$, the
	functions $\id$ and $\phi$ lie in $L^2(\pi_y)$. The function $P$ is
	continuously differentiable, with
	\begin{equation}
		P'(y)=\Cov_{\pi_y}(\id,\phi)>0,\qquad\text{for every }y\in\R\,.
		\label{eq:P-prime-cov}
	\end{equation}
	Consequently,
	\begin{equation}
		F'(x)>0,\qquad\text{for every }x\in\R\,.
		\label{eq:F-prime-positive}
	\end{equation}
	Thus, $P$ and $F$ are strictly increasing.
\end{lemma}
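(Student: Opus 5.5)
The proof rests on the exponential-family structure of the posterior. For fixed $y$, the density of $\pi_y$ is proportional to $\varrho(v)\,e^{y\phi(v)-\phi(v)^2/2}$ (after dropping the factor $e^{-y^2/2}$), so $\{\pi_y\}_{y\in\R}$ is a one-parameter exponential family with natural parameter $y$, sufficient statistic $\phi$, and base measure $\mu(\dd v):=\varrho(v)e^{-\phi(v)^2/2}\,\dd v$. Write $Z(y):=\int_\R e^{y\phi(v)}\mu(\dd v)$. Since $p_Y(y)=\varrho(y)\,Z(y)$ up to the normalising factor $(2\pi)^{-1/2}$ and $p_Y(y)<\infty$ for every real $y$ (Lemma~\ref{lem:densities}), $Z$ is finite on all of $\R$, so the natural parameter space is the whole line.

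The steps, in order, are as follows.

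\emph{Integrability.} By standard exponential-family theory \citep{Brown86}, $e^{s|\phi|}\in L^1(\pi_y)$ for small $s>0$, because $Z(y\pm s)<\infty$. Hence all moments of $\phi$ under $\pi_y$ are finite. The $\pi_y$-density is at most $C_y\varrho(v)$, since $e^{y\phi-\phi^2/2}\le e^{y^2/2}$, so $\id\in L^2(\pi_y)$. Mixed products $|v|\,e^{s|\phi(v)|}$ are integrable by Cauchy--Schwarz, applied after shrinking $s$.

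\emph{Derivative.} I will write $P(y)=N(y)/Z(y)$ with $N(y):=\int_\R v\,e^{y\phi(v)}\mu(\dd v)$. Differentiation under the integral sign is justified by dominating $|v|\,|\phi(v)|\,e^{(y\pm s)\phi(v)}$ locally uniformly in $y$; the previous step gives an integrable majorant, using $|\phi|e^{y\phi}\le s^{-1}\bigl(e^{(y+s)\phi}+e^{(y-s)\phi}\bigr)$. This gives $N'(y)=\int v\phi\,e^{y\phi}\,\dd\mu$ and $Z'(y)=\int\phi\,e^{y\phi}\,\dd\mu$, and therefore
\[
P'(y)=\E_{\pi_y}[\id\,\phi]-\E_{\pi_y}[\id]\,\E_{\pi_y}[\phi]=\Cov_{\pi_y}(\id,\phi).
\]
Continuity of $P'$ follows by the same domination. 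Although Lemma~\ref{lem:P-properties} already gives continuity of $P$, this argument yields $C^1$ directly.

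\emph{Strict positivity.} Here I use Lemma~\ref{lem:foc}: $\phi$ is strictly increasing. Then
\[
\Cov_{\pi_y}(\id,\phi)=\tfrac12\iint (v-w)\bigl(\phi(v)-\phi(w)\bigr)\,\pi_y(\dd v)\,\pi_y(\dd w).
\]
The integrand is nonnegative and strictly positive whenever $v\ne w$. Because $\pi_y$ has an everywhere positive Lebesgue density, it is not a point mass, so the integral is strictly positive.

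\emph{Consequence for $F$.} By \eqref{eq:F-convolution}, $F(x)=\E[P(x+U)]$. Formally $F'(x)=\E[P'(x+U)]>0$. To avoid a separate domination argument for $P'$, I would argue instead that $P$ is strictly increasing, so for $x_1<x_2$ the integrand $P(x_2+u)-P(x_1+u)$ is strictly positive for every $u$, and hence $F$ is strictly increasing. That alone only gives $F'\ge0$, so I still need $F'>0$. For this I use \eqref{eq:F-derivative}: $F'(x)=\int_\R uP(x+u)\varrho(u)\,\dd u=\int_0^\infty u\bigl(P(x+u)-P(x-u)\bigr)\varrho(u)\,\dd u$. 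The integrand is strictly positive for $u>0$ by strict monotonicity of $P$, so $F'(x)>0$.

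The main obstacle is the integrability and domination bookkeeping in the first two steps. It has to handle an arbitrary Borel $\phi$ with no moment assumptions, and it relies on the finiteness of $Z$ on all of $\R$.
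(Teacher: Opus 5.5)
Your proposal is correct and follows essentially the paper's argument: you differentiate numerator and denominator under the integral sign to get $P'(y)=\Cov_{\pi_y}(\id,\phi)$, symmetrise using strict monotonicity of $\phi$, and deduce $F'>0$ from \eqref{eq:F-derivative} and strict monotonicity of $P$. Your reflection $u\mapsto -u$ plays the role of the paper's independent copy $U_1$. The only real difference is bookkeeping. The paper differentiates $p_Y$ and $n$ directly. There $\partial_y\varrho(y-\phi(v))=(\phi(v)-y)\varrho(y-\phi(v))$ is bounded uniformly in $(v,y)$, and $\phi^2\varrho(y-\phi)$ is locally bounded in $y$. So the exponential-moment estimates you flag as the main obstacle are not needed.
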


\subsection{Conditional resampling}
\label{subsec:resampling}

The following lemma records the conditional-resampling and
exchangeable-pair identities. It is a textbook application of the transfer theorem in
\citet[Theorem~8.17]{Kallenberg21}. The construction yields an
exchangeable pair in the sense
of \citet{Chatterjee07}.

\begin{lemma}
	\label{lem:resampling}
	Set $V^{(0)}:=V$. On an extension of the underlying probability
	space, there exists a random variable $V^{(1)}$ such that $V^{(0)}$
	and $V^{(1)}$ are conditionally independent and identically
	distributed given $Y$. Moreover, the following hold:
	\begin{enumerate}[label=(\roman*)]
		\item\label{item:resampling-exchangeable}
		      the triples $(V^{(0)},V^{(1)},Y)$ and
		      $(V^{(1)},V^{(0)},Y)$ have the same law;

		\item\label{item:resampling-marginal}
		      the pairs $(V^{(1)},Y)$ and $(V,Y)$ have the same law;

		\item\label{item:resampling-posterior-mean}
		      $\E[V^{(1)}\mid Y,V^{(0)}]=P(Y)$ almost surely;

		\item\label{item:resampling-smoothed-mean}
		      $\E[V^{(1)}\mid V^{(0)}]
			      =F\bigl(\phi(V^{(0)})\bigr)$ almost surely.
	\end{enumerate}
\end{lemma}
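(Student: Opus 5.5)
The construction is the standard transfer-theorem one. Take the conditional law $y\mapsto\pi_y$ of $V$ given $Y=y$ from Subsection~\ref{subsec:continuous-version}. On the product extension $\Omega\times\R$, define the probability measure
\[
	\mathbb P(\dd\omega)\,\pi_{Y(\omega)}(\dd v^{(1)}),
\]
and let $V^{(1)}$ be the second coordinate. This requires $y\mapsto\pi_y(B)$ to be Borel for every Borel set $B$, which holds by Fubini, since $\pi_y$ has a jointly measurable density. Under this construction, the conditional law of $V^{(1)}$ given $\mathcal F$ is $\pi_Y$, where $\mathcal F$ is the original $\sigma$-field containing $V,U,Y$. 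In particular:
\begin{itemize}
	\item $V^{(1)}$ is conditionally independent of $V^{(0)}$ given $Y$;
	\item it has conditional law $\pi_Y$, which is also the conditional law of $V^{(0)}$ given $Y$.
\end{itemize}
Alternatively, one can invoke \citet[Theorem~8.17]{Kallenberg21} directly.

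For \ref{item:resampling-exchangeable}, I would compute the joint law on rectangles:
\[
	\mathbb P(V^{(0)}\in A,\,V^{(1)}\in B,\,Y\in C)
	=\int_C\pi_y(A)\,\pi_y(B)\,p_Y(y)\,\dd y\,.
\]
This uses the tower property and the fact that $(V,Y)$ has conditional law $\pi_Y$. The expression is symmetric in $A$ and $B$, so a $\pi$-$\lambda$ argument gives equality of the laws of the two triples. Part \ref{item:resampling-marginal} then follows by projecting onto the pairs $(V^{(0)},Y)$ and $(V^{(1)},Y)$.

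For \ref{item:resampling-posterior-mean}, conditional independence gives
\[
	\E[V^{(1)}\mid Y,V^{(0)}]=\E[V^{(1)}\mid Y]=\int v\,\pi_Y(\dd v)=P(Y)\,.
\]
Integrability holds because $V^{(1)}\overset{d}{=}V$.

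For \ref{item:resampling-smoothed-mean}, I would apply the tower property:
\[
	\E[V^{(1)}\mid V^{(0)}]=\E[P(Y)\mid V^{(0)}]\,.
\]
Conditioning on $V=v$, we have $Y=\phi(v)+U$ with $U$ independent of $V$. So by the independence (freezing) lemma this equals $F(\phi(V))$, using Lemma~\ref{lem:F-properties} for finiteness. That lemma already states
\[
	F(\phi(V))=\E[\E[V\mid Y]\mid V],
\]
and combining it with Lemma~\ref{lem:P-properties} closes the argument.

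The main obstacle is the measure-theoretic bookkeeping. One must make sure the extension preserves the joint law of $(V,U)$, so that the earlier lemmas still apply on the enlarged space. The independence lemma also requires joint measurability of $(v,u)\mapsto P(\phi(v)+u)$, which holds by continuity of $P$ and Borel measurability of $\phi$. Everything else is routine.
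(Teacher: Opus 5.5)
Your proposal is correct and follows essentially the same route as the paper. The paper also builds the measure $\mathbb P(\dd\omega)\,\pi_{Y(\omega)}(\dd v_1)$ on $\Omega\times\R$ and identifies the conditional law of $(V^{(0)},V^{(1)})$ given $Y$ as $\pi_Y\otimes\pi_Y$, using bounded test functions $f(V^{(0)})g(V^{(1)})$ where you use rectangles plus a $\pi$--$\lambda$ argument (an equivalent step), and it obtains (iii) from the conditional-law identification and (iv) from the tower property and the final assertion of Lemma~\ref{lem:F-properties}, exactly as you do.
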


\subsection{A lower Gaussian variance bound}
\label{subsec:lower-gaussian-variance}

The following classical inequality is the bounded Lipschitz case of
the lower Gaussian variance bound of \citet{Cacoullos82}. This is the
only form needed in the proof of
Proposition~\ref{prop:covariance-saturation}.

\begin{lemma}[Lower Gaussian variance bound]
	\label{lem:lower-gaussian-variance}
	Let $V$ be a standard normal random variable, and let
	$f\colon\R\to\R$ be bounded and Lipschitz. Then
	\[
		\bigl(\E[f'(V)]\bigr)^2
		\leq
		\Var\bigl(f(V)\bigr)\,,
	\]
	where $f'$ denotes the almost-everywhere derivative of $f$.
\end{lemma}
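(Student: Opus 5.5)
The plan is to reduce to the case of a smooth, compactly supported derivative and then use the Gaussian integration-by-parts (Stein) identity together with Cauchy--Schwarz. Since $f$ is Lipschitz, it is absolutely continuous and its almost-everywhere derivative $f'$ is bounded, say $|f'|\leq L$. Hence $\E[f'(V)]$ is finite.

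The first and main step is the Stein identity
\[
	\E[f'(V)]=\E[Vf(V)]=\Cov\bigl(f(V),V\bigr),
\]
where the second equality uses $\E[V]=0$. To prove the first equality, write
\[
	\E[f'(V)]=\int_\R f'(t)\varrho(t)\,\dd t
\]
and integrate by parts, using $\varrho'(t)=-t\varrho(t)$. This gives $\int_\R f(t)t\varrho(t)\,\dd t$. The boundary terms $f(t)\varrho(t)$ vanish as $t\to\pm\infty$ because $f$ is bounded. Integration by parts is legitimate here because $f\varrho$ is absolutely continuous on compact intervals, being a product of a Lipschitz function and a smooth one. Absolute integrability of $f'\varrho$ and of $tf(t)\varrho(t)$ follows from the boundedness of $f'$ and of $f$. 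One therefore integrates over $[-R,R]$ and lets $R\to\infty$.

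The second step applies the Cauchy--Schwarz inequality to the covariance:
\[
	\bigl(\E[f'(V)]\bigr)^2=\Cov\bigl(f(V),V\bigr)^2\leq\Var\bigl(f(V)\bigr)\Var(V)=\Var\bigl(f(V)\bigr),
\]
since $\Var(V)=1$. Both $f(V)$ and $V$ lie in $L^2$, so every quantity here is finite.

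The only delicate point is the justification of the integration by parts for a merely Lipschitz $f$. Local absolute continuity of $f\varrho$, combined with the decay of $\varrho$, handles this, so no approximation argument is actually needed. Equality holds exactly when $f$ is affine almost surely, which is the case that the paper's saturation argument will exploit later, although it is not part of the statement.
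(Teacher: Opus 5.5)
Your proof is correct and follows essentially the same route as the paper: integrate by parts on $[-R,R]$ against $\varrho$ using $\varrho'(t)=-t\varrho(t)$, let $R\to\infty$ to obtain $\E[f'(V)]=\E[Vf(V)]$, then apply Cauchy--Schwarz using $\E[V]=0$ and $\E[V^2]=1$. Two side remarks are slightly off, though neither is part of the argument. First, your opening mention of reducing to a smooth, compactly supported derivative is unnecessary, as you later note yourself. Second, for bounded $f$ equality actually forces $f$ to be constant. Moreover, the paper's saturation step uses only the inequality; the affine equality case comes instead from Proposition~\ref{prop:covariance-maximizers}.
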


\section{Noise-trader loss and informed trader profit}
\label{sec:quantities}

We work in the normalised setting and restate the notation needed
below. Let $U$ and $V$ be independent standard normal random variables,
let $\phi\colon\R\to\R$ be Borel measurable, and set
\[
	Y=\phi(V)+U\,.
\]
Writing $\varrho$ for the standard normal density, define
$P\colon\R\to\R$ by
\[
	P(y)
	:=
	\frac{
		\displaystyle\int_\R
		v\varrho(v)\varrho\bigl(y-\phi(v)\bigr)\,\dd v
	}{
		\displaystyle\int_\R
		\varrho(v)\varrho\bigl(y-\phi(v)\bigr)\,\dd v
	},
	\qquad y\in\R\,.
\]
By Lemma~\ref{lem:P-properties}, $P$ is a continuous inverse
regression associated with $\phi$, and
\[
	P(Y)=\E[V\mid Y]
	\qquad\text{almost surely}\,.
\]
Define its Gaussian smoothing $F\colon\R\to\R$ by
\[
	F(x):=\E[P(x+U)],
	\qquad x\in\R\,.
\]
The properties of $F$ used below are in
Lemma~\ref{lem:F-properties}. All remaining notation is as in
Section~\ref{sec:preliminaries}.

\subsection{Noise-trader loss and covariance}

The \emph{noise traders' expected loss} is
\[
	\Gamma:=\E\!\left[U\bigl(P(Y)-V\bigr)\right]\,.
\]

\begin{lemma}
	\label{lem:ntloss}
	The loss $\Gamma$ is well defined and finite. For every Borel
	version $\widetilde P$ of the inverse regression
	$y\mapsto\E[V\mid Y=y]$,
	\[
		\Gamma
		=\E\!\left[U\bigl(\widetilde P(Y)-V\bigr)\right]
		=\E[U\widetilde P(Y)]
		=\Cov\bigl(U,\E[V\mid Y]\bigr)
		=\E[F'(\phi(V))]\,.
	\]
\end{lemma}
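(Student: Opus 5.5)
The plan is to prove the chain of equalities from left to right, establishing integrability at each step, and then evaluate $\E[U\widetilde P(Y)]$ by conditioning on $V$ and using Lemma~\ref{lem:F-properties}.

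\emph{Integrability and independence of the version.} Since $U\in L^2$, $V\in L^2$, and $P(Y)\in L^2$ by Lemma~\ref{lem:P-properties}, the product $U(P(Y)-V)$ lies in $L^1$ by Cauchy--Schwarz, so $\Gamma$ is finite. Any Borel version $\widetilde P$ satisfies $\widetilde P(Y)=\E[V\mid Y]=P(Y)$ almost surely, so replacing $P$ by $\widetilde P$ does not change the expectation. Because $U\perp V$ and both have mean zero, $\E[UV]=0$, which gives $\Gamma=\E[U\widetilde P(Y)]$. Since $\E[U]=0$, this equals $\Cov(U,\E[V\mid Y])$.

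\emph{Identification with $\E[F'(\phi(V))]$.} Condition on $V$. Because $U$ is independent of $V$ and $Y=\phi(V)+U$, a regular conditional version of $U\,P(Y)$ given $V=v$ has expectation
\[
	\int_\R uP(\phi(v)+u)\varrho(u)\,\dd u=F'(\phi(v)),
\]
by \eqref{eq:F-derivative}. This integral converges absolutely for every $x$, by Lemma~\ref{lem:F-properties}. The remaining step is to justify $\E[UP(Y)]=\E\bigl[\E[UP(Y)\mid V]\bigr]$, which requires $UP(Y)\in L^1$; this holds by the Cauchy--Schwarz bound above. The disintegration then follows from Fubini applied to the joint density $\varrho(v)\varrho(u)$ of $(V,U)$. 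Working with the continuous version $P$ rather than $\widetilde P$ avoids any null-set issues, since we have already shown that the two give the same $\Gamma$. This yields $\Gamma=\E[F'(\phi(V))]$, and in particular $F'(\phi(V))\in L^1$.

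\emph{Main obstacle.} The delicate point is the Fubini step. The function $(v,u)\mapsto uP(\phi(v)+u)$ must be jointly integrable, and this integrability has to come from the global $L^2$ bound rather than from pointwise bounds on $P$, since $P$ may grow and $\phi$ has no moment assumptions. Once absolute integrability is secured by Cauchy--Schwarz, Tonelli applied to the absolute values followed by Fubini gives the iterated integral. Its inner integral is precisely the formula for $F'$ in Lemma~\ref{lem:F-properties}.
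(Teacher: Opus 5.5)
Your proposal is correct and follows the paper's proof essentially step for step. You use Cauchy--Schwarz with $P(Y)\in L^2$ for integrability, $\E[UV]=0$ and $\E[U]=0$ for the first identities, conditioning on $V$ (equivalently, Fubini against the product density of $(V,U)$) together with \eqref{eq:F-derivative} to identify $\E[UP(Y)\mid V]$ with $F'(\phi(V))$, and the almost-sure equality $\widetilde P(Y)=P(Y)$ for version independence; the only differences are that you handle the version clause first and spell out the Fubini justification that the paper leaves implicit.
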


\begin{proof}
	By Lemma~\ref{lem:P-properties}, $P(Y)\in L^2$. Since
	$U,V\in L^2$, the Cauchy--Schwarz inequality gives
	$UP(Y),UV\in L^1$, so $\Gamma$ is well defined and finite. Since
	$\E[UV]=0$ and $P(Y)=\E[V\mid Y]$ almost surely,
	\[
		\Gamma
		=\E[UP(Y)]
		=\Cov\bigl(U,\E[V\mid Y]\bigr)\,.
	\]

	Conditioning on $V$, using the independence of $U$ and $V$, and
	applying \eqref{eq:F-derivative} give
	\[
		\E[UP(Y)\mid V]
		=
		\int_{\R}uP\bigl(\phi(V)+u\bigr)\varrho(u)\,\dd u
		=
		F'(\phi(V))
		\qquad\text{almost surely}\,.
	\]
	Since $UP(Y)$ is integrable, so is $F'(\phi(V))$. Taking
	expectations gives
	\[
		\Gamma=\E[F'(\phi(V))]\,.
	\]

	Finally, if $\widetilde P$ is any Borel version of the inverse
	regression, then
	\[
		\widetilde P(Y)=P(Y)\qquad\text{almost surely}\,.
	\]
	Hence
	\[
		U\bigl(\widetilde P(Y)-V\bigr)
		=
		U\bigl(P(Y)-V\bigr)
		\qquad\text{and}\qquad
		U\widetilde P(Y)=UP(Y)
		\qquad\text{almost surely}\,,
	\]
	which proves the remaining equalities.
\end{proof}

\subsection{Informed-trader profit}

The \emph{informed trader's conditional expected profit} is
\[
	\Pi(v)
	:=\phi(v)\bigl(v-F(\phi(v))\bigr),
	\qquad\text{for every }v\in\R\,.
\]

\begin{lemma}
	\label{lem:itprofit}
	The function $\Pi\colon\R\to\R$ is finite-valued and Borel
	measurable. For every Borel version $\widetilde P$ of the inverse
	regression and every $v\in\R$,
	\[
		\Pi(v)
		=
		\E\!\left[
			\bigl(v-\widetilde P(\phi(v)+U)\bigr)\phi(v)
			\right]\,.
	\]
\end{lemma}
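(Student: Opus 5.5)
The claim reduces to Lemma~\ref{lem:F-properties}, applied at the single point $x=\phi(v)$ for each fixed $v$. There is no randomness in $\phi(v)$ once $v$ is fixed, so the lemma's statements about $\widetilde P(x+U)$ apply directly.

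First, finiteness and measurability. By Lemma~\ref{lem:F-properties}, $F\colon\R\to\R$ is finite-valued and continuously differentiable, hence Borel measurable. Since $\phi$ is Borel measurable, the composition $F\circ\phi$ is Borel measurable. The maps $v\mapsto v$ and $\phi$ are Borel, so the product and difference
\[
	\Pi(v)=\phi(v)\bigl(v-F(\phi(v))\bigr)
\]
is a finite-valued Borel function.

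Second, the representation. Fix $v\in\R$ and set $x:=\phi(v)$, a real constant. Let $\widetilde P$ be any Borel version of the inverse regression. Lemma~\ref{lem:F-properties} gives $\widetilde P(x+U)\in L^1$ and $\E[\widetilde P(x+U)]=F(x)$. The random variable $\bigl(v-\widetilde P(x+U)\bigr)x$ is therefore integrable, being a constant times the sum of a constant and an $L^1$ variable. By linearity of expectation,
\[
	\E\!\left[\bigl(v-\widetilde P(\phi(v)+U)\bigr)\phi(v)\right]
	=\phi(v)\bigl(v-F(\phi(v))\bigr)=\Pi(v).
\]

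The only point needing care is that the identity must hold for every $v$, not merely almost every $v$, even though $\widetilde P$ is determined only up to null sets of the law of $Y$. This is exactly what Lemma~\ref{lem:F-properties} supplies. Because $x+U$ has an everywhere positive density, and $Y$ does too by Lemma~\ref{lem:densities}, the two laws are mutually absolutely continuous. Hence $\widetilde P(x+U)=P(x+U)$ almost surely for every $x$, so the expectation does not depend on the chosen version. Nothing beyond that lemma is required.
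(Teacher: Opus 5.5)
Your proof is correct and follows essentially the same route as the paper. Finiteness and Borel measurability come from the continuity of $F$ in Lemma~\ref{lem:F-properties} composed with the Borel map $\phi$, and the representation follows by applying that lemma's version clause at the fixed point $x=\phi(v)$; your closing remark about why the identity holds for every $v$ correctly restates the argument behind that clause.
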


\begin{proof}
	By Lemma~\ref{lem:F-properties}, $F$ is finite-valued and
	continuous. Since $\phi$ is Borel measurable, $\Pi$ is finite-valued
	and Borel measurable.

	For every Borel version $\widetilde P$ of the inverse regression,
	Lemma~\ref{lem:F-properties} gives, for every $v\in\R$,
	\[
		\widetilde P(\phi(v)+U)\in L^1,
		\qquad
		\E[\widetilde P(\phi(v)+U)]=F(\phi(v))\,.
	\]
	Therefore,
	\[
		\E\!\left[
			\bigl(v-\widetilde P(\phi(v)+U)\bigr)\phi(v)
			\right]
		=
		\phi(v)\bigl(v-F(\phi(v))\bigr)
		=
		\Pi(v)\,.
	\]
\end{proof}

The random variable $\Pi(V)$ need not be integrable, so the informed
trader's ex ante expected profit $\E[\Pi(V)]$ need not exist.
\section{Sharp universal bounds on noise-trader loss}
\label{sec:covariance-bound}

Retain the notation of Section~\ref{sec:quantities}. The following
proposition gives sharp universal bounds on noise-trader loss and
characterises the equality cases.

\begin{proposition}[Covariance bound and equality cases]
	\label{prop:covariance-maximizers}
	The quantity $\Gamma$ satisfies
	\begin{equation}
		-\frac12\leq\Gamma\leq\frac12\,.
		\label{eq:covariance-bound}
	\end{equation}
	Moreover, $\Gamma=\pm\tfrac12$ if and only if there exists
	$a\in\R$ such that
	\[
		\phi(v)=\pm v+a
		\qquad\text{for Lebesgue-almost every }v\in\R\,,
	\]
	with the same choice of sign.
\end{proposition}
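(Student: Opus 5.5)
The plan is to bound $\Gamma$ by a Cauchy--Schwarz argument that uses the orthogonality $\E[UV]=0$ together with the projection property of the posterior mean. After that, I will read off the equality cases from the equality case of Cauchy--Schwarz.

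Set $s:=\E[P(Y)^2]$. By Lemma~\ref{lem:P-properties}, $P(Y)=\E[V\mid Y]\in L^2$, so $\E[VP(Y)]=\E[P(Y)^2]=s$. Conditional Jensen gives $s\le\E[V^2]=1$. By Lemma~\ref{lem:ntloss}, $\Gamma=\E[UP(Y)]$. Since $\E[UV]=0$, for every $\alpha\in\R$ we have
\[
	\Gamma=\E\!\left[U\bigl(P(Y)-\alpha V\bigr)\right],
	\qquad
	|\Gamma|\le\bigl(\E[(P(Y)-\alpha V)^2]\bigr)^{1/2}
	=\bigl(s-2\alpha s+\alpha^2\bigr)^{1/2}.
\]
Choosing $\alpha=s$ yields
\[
	|\Gamma|\le\sqrt{s(1-s)}\le\tfrac12 ,
\]
which proves \eqref{eq:covariance-bound}.

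For the equality case, suppose $\Gamma=\pm\tfrac12$. Then both inequalities above are equalities. This forces $s=\tfrac12$. It also forces equality in Cauchy--Schwarz, so $U=k\bigl(P(Y)-\tfrac12V\bigr)$ almost surely for some constant $k$. Taking second moments gives $1=k^2(s-s^2)=k^2/4$, so $k=\pm2$, and the sign of $k$ matches the sign of $\Gamma$. Hence
\[
	P\bigl(\phi(V)+U\bigr)=\tfrac12\bigl(V\pm U\bigr)\qquad\text{almost surely.}
\]
Since $(V,U)$ has an everywhere positive density, this identity holds for Lebesgue-a.e.\ $(v,u)$. By Fubini, for a.e.\ $v$ it holds for a.e.\ $u$. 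Substituting $y=\phi(v)+u$ gives
\[
	P(y)=\tfrac12\bigl(v\pm(y-\phi(v))\bigr)\qquad\text{for a.e.\ } y .
\]
Because $P$ is continuous (Lemma~\ref{lem:P-properties}), this holds for every $y$. Fixing any $y_0$ then shows that $v\mp\phi(v)=2P(y_0)\mp y_0$ for a.e.\ $v$. That is, $\phi(v)=\pm v+a$ for a.e.\ $v$ with a fixed constant $a$, and with the same sign as $\Gamma$.

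For the converse, suppose $\phi(v)=\pm v+a$ almost everywhere. Modifying $\phi$ on a Lebesgue-null set does not change the law of $(V,Y)$, and Lemma~\ref{lem:ntloss} shows that $\Gamma$ does not depend on the version of $P$. So we may take $\phi$ exactly affine. Then $(V,Y)$ is jointly Gaussian and $\E[V\mid Y]=\pm\tfrac12(Y-a)$. Hence $\Gamma=\Cov(U,\pm\tfrac12 Y)=\pm\tfrac12$.

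The only delicate step is the passage from the almost-sure identity to a pointwise statement about $\phi$. I handle it with Fubini together with the continuity of $P$, which removes the dependence on null sets in $y$.
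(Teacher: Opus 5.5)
Your proof is correct and follows essentially the same route as the paper. The paper's inequality $0\le\E[(X\mp U)^2]$ with $X=2P(Y)-V$ is your Cauchy--Schwarz step at the fixed choice $\alpha=\tfrac12$, for which $\E[(P(Y)-\tfrac12V)^2]=\tfrac14$ regardless of $\E[P(Y)^2]$, so the paper reads off $2P(Y)=V\pm U$ without first forcing $\E[P(Y)^2]=\tfrac12$; the only other differences from the paper are that your optimised $\alpha$ gives a slightly sharper intermediate bound, and that in the Fubini step the paper uses a common $y$ outside two null sets instead of continuity of $P$.
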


\begin{proof}
	Set $M=\E[V\mid Y]$. Conditional Jensen's inequality and the tower
	property give
	\[
		\E[M^2]\leq\E\bigl[\E[V^2\mid Y]\bigr]=\E[V^2]=1\,.
	\]
	Thus, $U,M\in L^2$, and Cauchy--Schwarz gives $UM\in L^1$.
	By Lemma~\ref{lem:ntloss},
	\[
		\Gamma=\Cov(U,M)=\E[UM]\,,
	\]
	where the second equality uses $\E[U]=0$.

	Define
	\[
		X:=2M-V\,.
	\]
	Because $M,V\in L^2$, we have $X\in L^2$.
	For $s\in\{-1,1\}$,
	\[
		\begin{aligned}
			0 & \leq\E[(X-sU)^2]          \\
			  & =\E[X^2]+\E[U^2]-2s\E[XU] \\
			  & =1+\E[X^2]-2s\E[XU]\,.
		\end{aligned}
	\]
	By the preceding identity and the independence of $U$ and $V$,
	\[
		\E[XU]
		=2\E[MU]-\E[VU]
		=2\Gamma\,.
	\]
	Therefore,
	\[
		0\leq\E[(X-sU)^2]=1+\E[X^2]-4s\Gamma\,.
	\]
	We have
	\[
		\E[X^2]=4\E[M^2]-4\E[VM]+\E[V^2]\,.
	\]
	Because $V,M\in L^2$, Cauchy--Schwarz gives $VM\in L^1$.
	Moreover, because $M=\E[V\mid Y]$, the pull-out property gives
	\[
		\E[VM\mid Y]=M\E[V\mid Y]=M^2
		\qquad\text{almost surely}\,,
	\]
	and therefore
	\[
		\E[VM]=\E[M^2]\,.
	\]
	Consequently,
	\[
		\E[X^2]=\E[V^2]=1\,.
	\]
	Thus, for $s\in\{-1,1\}$,
	\[
		0\leq\E[(X-sU)^2]=2-4s\Gamma\,.
	\]
	Taking $s=1$ and $s=-1$ proves \eqref{eq:covariance-bound}.

	Fix $s\in\{-1,1\}$ and suppose that
	\[
		\Gamma=\frac{s}{2}\,.
	\]
	The preceding display gives
	\[
		\E[(X-sU)^2]=0\,,
	\]
	so $X=sU$ almost surely. Because $X=2M-V$ and $M=P(Y)$ almost
	surely by Lemma~\ref{lem:P-properties},
	\[
		2P(Y)=V+sU\qquad\text{almost surely}\,.
	\]
	Because $s^2=1$ and $Y=\phi(V)+U$, this implies
	\[
		Y-2sP(Y)=\phi(V)-sV\qquad\text{almost surely}\,.
	\]
	Because $(V,U)$ has an everywhere positive density, its law and
	Lebesgue measure on $\R^2$ have the same null sets. Consequently,
	\[
		\phi(v)+u-2sP\bigl(\phi(v)+u\bigr)=\phi(v)-sv
	\]
	for Lebesgue-almost every $(v,u)\in\R^2$.

	Let $\mathcal L^1$ and $\mathcal L^2$ denote Lebesgue measure on
	$\R$ and $\R^2$, respectively. Because $\phi$ and $P$ are Borel
	measurable, the exceptional set
	\[
		N:=\left\{(v,u)\in\R^2:
		\phi(v)+u-2sP\bigl(\phi(v)+u\bigr)\ne\phi(v)-sv\right\}
	\]
	is Borel measurable. The preceding almost-everywhere identity gives
	\[
		\mathcal L^2(N)=0\,.
	\]

	For $v\in\R$, let $N_v:=\{u\in\R:(v,u)\in N\}$ be the vertical
	section of $N$ at $v$. Tonelli's theorem gives
	\[
		0=\mathcal L^2(N)=\int_\R\mathcal L^1(N_v)\,\dd v\,.
	\]
	Because the integrand is nonnegative, there exists a set
	$A\subseteq\R$ of full Lebesgue measure such that
	\[
		\mathcal L^1(N_v)=0\qquad\text{for every }v\in A\,.
	\]

	Fix $v\in A$ and define
	\[
		C_v:=\{y\in\R:y-2sP(y)\ne\phi(v)-sv\}\,.
	\]
	We have
	\[
		C_v=\phi(v)+N_v:=\{\phi(v)+u:u\in N_v\}\,.
	\]
	Indeed, writing $y=\phi(v)+u$, the inequality defining $C_v$ is
	equivalent to $(v,u)\in N$. Because Lebesgue measure is invariant
	under translations,
	\[
		\mathcal L^1(C_v)=\mathcal L^1(N_v)=0\,.
	\]
	Consequently, for every $v\in A$,
	\[
		y-2sP(y)=\phi(v)-sv
		\qquad\text{for Lebesgue-almost every }y\in\R\,.
	\]

	Let $v_1,v_2\in A$. Both $C_{v_1}$ and $C_{v_2}$ are null, so
	their union is null. Choose
	\[
		y\in\R\setminus\bigl(C_{v_1}\cup C_{v_2}\bigr)\,.
	\]
	We have
	\[
		\phi(v_1)-sv_1=y-2sP(y)=\phi(v_2)-sv_2\,.
	\]
	Thus, $\phi(v)-sv$ is constant on $A$. Because $A$ has full
	Lebesgue measure, there exists $a\in\R$ such that
	\[
		\phi(v)=sv+a
		\qquad\text{for Lebesgue-almost every }v\in\R\,.
	\]

	Conversely, suppose that, for some $s\in\{-1,1\}$ and $a\in\R$,
	\[
		\phi(v)=sv+a
		\qquad\text{for Lebesgue-almost every }v\in\R\,.
	\]
	Because the law of $V$ is absolutely continuous,
	\[
		Y=sV+U+a\qquad\text{almost surely}\,.
	\]
	The pair $(V,Y)$ is jointly Gaussian, with
	\[
		\E[Y]=a,\qquad \Cov(V,Y)=s,\qquad \Var(Y)=2\,.
	\]
	The Gaussian regression formula gives
	\[
		M=\E[V\mid Y]
		=\frac{s}{2}(Y-a)
		=\frac{V+sU}{2}
		\qquad\text{almost surely}\,.
	\]
	It follows that
	\[
		\Gamma
		=\E[UM]
		=\frac12\bigl(\E[UV]+s\E[U^2]\bigr)
		=\frac{s}{2}\,.
	\]
	This proves the equality characterisation for both choices of sign.
\end{proof}

\section{Accounting identity for monotone strategies}
\label{sec:balance}

Retain the notation of Section~\ref{sec:quantities}. This section
establishes an accounting identity between the noise traders' expected
loss and the informed trader's ex ante expected profit.

The proof uses a conditionally independent replica of $V$ to express
$2\Gamma$ as the expectation of a symmetric supermodularity increment
of the bilinear map $(x,v)\mapsto xv$. This increment is nonnegative
when $\phi$ is nondecreasing. Lattice truncation and two applications
of monotone convergence avoid any a priori integrability assumption
on either $\phi(V)$ or $\Pi(V)$.

\begin{proposition}[Accounting identity]
	\label{prop:balance}
	If $\phi$ is nondecreasing and
	\[
		\Pi(V)\geq 0
		\qquad\text{almost surely}\,,
	\]
	then $\Pi(V)\in L^1$ and
	\[
		\Gamma=\E[\Pi(V)]\,.
	\]
\end{proposition}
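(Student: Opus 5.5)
The plan is to express both sides of the identity as half the expectation of the symmetric supermodularity increment
\[
	D:=\bigl(\phi(V^{(0)})-\phi(V^{(1)})\bigr)\bigl(V^{(0)}-V^{(1)}\bigr),
\]
where $V^{(1)}$ is the conditionally independent replica of Lemma~\ref{lem:resampling}. Because $\phi$ is nondecreasing, $D\geq 0$. I will first show $\Gamma=\tfrac12\E[D]$ with no truncation at all, and then obtain $\E[\Pi(V)]=\tfrac12\E[D]$ by truncating $\phi$ and passing to the limit with monotone convergence.

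\emph{Step 1: the noise-trader side.} Set $U^{(0)}:=U=Y-\phi(V^{(0)})$ and $U^{(1)}:=Y-\phi(V^{(1)})$.
\begin{itemize}
	\item Integrability. By item~\ref{item:resampling-marginal}, the pair $(V^{(1)},U^{(1)})$ has the same law as $(V,U)$, so all four variables $U^{(i)},V^{(i)}$ lie in $L^2$. By Cauchy--Schwarz every product $U^{(i)}V^{(j)}$ is integrable.
	\item Diagonal terms. We have $\E[U^{(0)}V^{(0)}]=0$ by independence, and $\E[U^{(1)}V^{(1)}]=0$ by equality of laws.
	\item Cross terms. Item~\ref{item:resampling-posterior-mean} and Lemma~\ref{lem:ntloss} give $\E[U^{(0)}V^{(1)}]=\E[UP(Y)]=\Gamma$. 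Item~\ref{item:resampling-exchangeable} gives $\E[U^{(1)}V^{(0)}]=\Gamma$.
\end{itemize}
Expanding the product therefore yields $\E[(U^{(0)}-U^{(1)})(V^{(0)}-V^{(1)})]=-2\Gamma$. Since $U^{(0)}-U^{(1)}=\phi(V^{(1)})-\phi(V^{(0)})$, this says exactly that $D\in L^1$ and $\Gamma=\tfrac12\E[D]$.

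\emph{Step 2: the informed-trader side, by lattice truncation.} For $n\in\mathbb N$, let $\phi_n:=(\phi\wedge n)\vee(-n)$. Each $\phi_n$ is bounded and nondecreasing.
\begin{itemize}
	\item Monotonicity of the increments. Set $D_n:=(\phi_n(V^{(0)})-\phi_n(V^{(1)}))(V^{(0)}-V^{(1)})$. Truncation of a nondecreasing function preserves the sign of increments and does not decrease their size as $n$ grows, so $0\leq D_n\uparrow D$.
	\item Identifying $\E[D_n]$. Since $\phi_n$ is bounded, exchangeability (item~\ref{item:resampling-exchangeable}) gives $\tfrac12\E[D_n]=\E[\phi_n(V^{(0)})V^{(0)}]-\E[\phi_n(V^{(0)})V^{(1)}]$. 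Item~\ref{item:resampling-smoothed-mean} then gives
	\[
		\tfrac12\E[D_n]=\E\bigl[\phi_n(V)\bigl(V-F(\phi(V))\bigr)\bigr].
	\]
	Note that $F$ here is still the smoothing attached to the \emph{original} $\phi$, because the replica is built from $Y=\phi(V)+U$.
	\item Comparison with $\Pi$. The integrand is $\Pi_n(V):=\phi_n(V)(V-F(\phi(V)))$. Because $\phi_n(v)$ has the same sign as $\phi(v)$ and satisfies $|\phi_n(v)|\uparrow|\phi(v)|$, the hypothesis $\Pi(V)\geq0$ a.s.\ gives $0\leq\Pi_n(V)\uparrow\Pi(V)$ almost surely.
\end{itemize}

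\emph{Step 3: passing to the limit.} Apply monotone convergence on both sides:
\[
	\E[\Pi(V)]=\lim_n\E[\Pi_n(V)]=\lim_n\tfrac12\E[D_n]=\tfrac12\E[D]=\Gamma<\infty.
\]
This gives $\Pi(V)\in L^1$ and the identity.

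The main obstacle is Step~2, and in particular the sign bookkeeping. One must check that $\Pi_n$ is monotone in $n$, which is where $\Pi\geq 0$ is used, and that $D_n$ is monotone, which is where $\phi$ nondecreasing is used. One must also keep the posterior objects $P$ and $F$ tied to the untruncated $\phi$ throughout. If the monotonicity of $D_n$ proves delicate, a fallback is Fatou's lemma combined with dominated convergence via $0\leq D_n\leq D\in L^1$, using the integrability of $D$ already obtained in Step~1.
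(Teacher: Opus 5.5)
Your proposal is correct and follows the paper's proof essentially step for step. It uses the same untruncated exchangeable-pair identity $2\Gamma=\E[D]$, which also gives $D\in L^1$. It then applies the same lattice truncation $\phi_n$, with item (iv) of Lemma~\ref{lem:resampling} identifying $\tfrac12\E[D_n]$ with $\E[\phi_n(V)(V-F(\phi(V)))]$, and the same two matched monotone-convergence passages, driven respectively by monotonicity of $\phi$ and by $\Pi(V)\geq0$; the dominated-convergence fallback is unnecessary, since $0\le D_n\le D$ rests on the same monotonicity observation that already yields $D_n\uparrow D$.
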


\begin{proof}
	Let $V^{(0)}:=V$, and let $V^{(1)}$ be the conditionally
	independent replica supplied by Lemma~\ref{lem:resampling}. For
	$i\in\{0,1\}$, define
	\[
		U^{(i)}:=Y-\phi(V^{(i)})\,.
	\]
	By definition, $U^{(0)}=U$. By Lemma~\ref{lem:resampling}
	\ref{item:resampling-marginal},
	\[
		(V^{(1)},Y)\overset{d}{=}(V^{(0)},Y)\,.
	\]
	Therefore,
	\[
		(V^{(1)},U^{(1)})
		\overset{d}{=}
		(V^{(0)},U^{(0)})\,.
	\]
	In particular, $V^{(1)},U^{(1)}\in L^2$. Hence, by
	Cauchy--Schwarz, all four products
	\[
		U^{(0)}V^{(1)},\qquad
		U^{(1)}V^{(0)},\qquad
		U^{(1)}V^{(1)},\qquad
		U^{(0)}V^{(0)}
	\]
	belong to $L^1$.

	\medskip

	\emph{Part I. Symmetric representation of $\Gamma$.}
	Because $U^{(0)}=U$ and $V^{(0)}=V$ are independent and centred,
	\[
		\E[U^{(0)}V^{(0)}]=0\,.
	\]
	By Lemma~\ref{lem:ntloss},
	\[
		\Gamma=\E[U^{(0)}P(Y)]\,.
	\]

	By Lemma~\ref{lem:resampling}
	\ref{item:resampling-exchangeable},
	\[
		\E[U^{(1)}V^{(0)}]
		=\E[U^{(0)}V^{(1)}]\,.
	\]
	Moreover,
	\[
		U^{(0)}=Y-\phi(V^{(0)})
	\]
	is measurable with respect to the sigma-algebra generated by
	$(Y,V^{(0)})$. Because $U^{(0)}V^{(1)}\in L^1$, the tower and
	pull-out properties give
	\[
		\begin{aligned}
			\E[U^{(1)}V^{(0)}]
			 & =
			\E[U^{(0)}V^{(1)}] \\
			 & =
			\E\!\left[
			\E[U^{(0)}V^{(1)}\mid Y,V^{(0)}]
			\right]            \\
			 & =
			\E\!\left[
			U^{(0)}
			\E[V^{(1)}\mid Y,V^{(0)}]
			\right]            \\
			 & =
			\E[U^{(0)}P(Y)]    \\
			 & =
			\Gamma\,,
		\end{aligned}
	\]
	where the penultimate equality follows from
	Lemma~\ref{lem:resampling}
	\ref{item:resampling-posterior-mean}.

	The equality in law of
	$(V^{(1)},U^{(1)})$ and $(V^{(0)},U^{(0)})$ gives
	\[
		\E[U^{(1)}V^{(1)}]
		=\E[U^{(0)}V^{(0)}]
		=0\,.
	\]
	Because
	\[
		U^{(1)}-U^{(0)}
		=\phi(V^{(0)})-\phi(V^{(1)})\,,
	\]
	we obtain
	\begin{align}
		2\Gamma
		 & =
		\E[U^{(1)}V^{(0)}]
		+\E[U^{(0)}V^{(1)}]
		-\E[U^{(1)}V^{(1)}]
		-\E[U^{(0)}V^{(0)}]
		\nonumber \\
		 & =
		\E\!\left[
		\bigl(U^{(1)}-U^{(0)}\bigr)
		\bigl(V^{(0)}-V^{(1)}\bigr)
		\right]
		\nonumber \\
		 & =
		\E\!\left[
		\bigl(\phi(V^{(0)})-\phi(V^{(1)})\bigr)
		\bigl(V^{(0)}-V^{(1)}\bigr)
		\right]\,.
		\label{eq:exchangeable-balance}
	\end{align}

	\medskip

	\emph{Part II. Truncation.}
	For every integer $n\geq1$, define
	\[
		\phi_n(v):=(-n)\vee\bigl(\phi(v)\wedge n\bigr)\,.
	\]
	By Lemma~\ref{lem:resampling}
	\ref{item:resampling-smoothed-mean},
	\[
		\E[V^{(1)}\mid V^{(0)}]
		=F\bigl(\phi(V^{(0)})\bigr)
		\qquad\text{almost surely}\,.
	\]
	Because $V^{(1)}\in L^1$, it follows that
	\[
		F\bigl(\phi(V^{(0)})\bigr)\in L^1\,.
	\]
	Because $\phi_n$ is bounded, all terms in the following calculation
	are integrable. Expanding the product and using
	Lemma~\ref{lem:resampling}
	\ref{item:resampling-exchangeable} give
	\begin{align}
		 & \frac12\E\!\left[
		\bigl(\phi_n(V^{(0)})-\phi_n(V^{(1)})\bigr)
		\bigl(V^{(0)}-V^{(1)}\bigr)
		\right]
		\nonumber            \\
		 & \qquad
		=\E\!\left[
		\phi_n(V^{(0)})
		\bigl(V^{(0)}-V^{(1)}\bigr)
		\right]
		\nonumber            \\
		 & \qquad
		=\E\!\left[
			\phi_n(V^{(0)})
			\left(
			V^{(0)}
			-\E[V^{(1)}\mid V^{(0)}]
			\right)
			\right]
		\nonumber            \\
		 & \qquad
		=\E\!\left[
			\phi_n(V^{(0)})
			\bigl(
			V^{(0)}-F(\phi(V^{(0)}))
			\bigr)
			\right]\,.
		\label{eq:truncated-balance}
	\end{align}

	\medskip

	\emph{Part III. First monotone convergence.}
	By hypothesis, $\phi$ is nondecreasing. Because the truncation map is
	nondecreasing, each $\phi_n$ is also nondecreasing. Hence, for every
	$n\geq1$,
	\[
		\begin{aligned}
			 & \bigl(
			\phi_n(V^{(0)})-\phi_n(V^{(1)})
			\bigr)
			\bigl(V^{(0)}-V^{(1)}\bigr) \\
			 & \qquad
			=
			\bigl|
			\phi_n(V^{(0)})-\phi_n(V^{(1)})
			\bigr|
			\,\bigl|V^{(0)}-V^{(1)}\bigr|
			\geq0\,.
		\end{aligned}
	\]

	For each fixed outcome,
	\[
		\bigl|
		\phi_n(V^{(0)})-\phi_n(V^{(1)})
		\bigr|
	\]
	is the Lebesgue length of the intersection of $[-n,n]$ with the
	interval having endpoints $\phi(V^{(0)})$ and $\phi(V^{(1)})$.
	As $n$ increases, the intervals $[-n,n]$ expand to $\R$.
	Therefore,
	\[
		\bigl|
		\phi_n(V^{(0)})-\phi_n(V^{(1)})
		\bigr|
		\uparrow
		\bigl|
		\phi(V^{(0)})-\phi(V^{(1)})
		\bigr|
	\]
	almost surely. Because $\lvert V^{(0)}-V^{(1)}\rvert$ is
	nonnegative and does not depend on $n$, it follows that
	\[
		\begin{aligned}
			0
			 & \leq
			\bigl(\phi_n(V^{(0)})-\phi_n(V^{(1)})\bigr)
			\bigl(V^{(0)}-V^{(1)}\bigr) \\
			 & \uparrow
			\bigl(\phi(V^{(0)})-\phi(V^{(1)})\bigr)
			\bigl(V^{(0)}-V^{(1)}\bigr)
			\qquad\text{almost surely}\,.
		\end{aligned}
	\]

	Hence the monotone convergence theorem and
	\eqref{eq:exchangeable-balance} give
	\begin{equation}
		\begin{aligned}
			 & \lim_{n\to\infty}\frac12
			\E\!\left[
			\bigl(
			\phi_n(V^{(0)})-\phi_n(V^{(1)})
			\bigr)
			\bigl(V^{(0)}-V^{(1)}\bigr)
			\right]                     \\
			 & \qquad
			=
			\frac12
			\E\!\left[
			\bigl(
			\phi(V^{(0)})-\phi(V^{(1)})
			\bigr)
			\bigl(V^{(0)}-V^{(1)}\bigr)
			\right]
			=\Gamma\,.
		\end{aligned}
		\label{eq:first-monotone-convergence}
	\end{equation}
	In particular, this limit is finite.

	\medskip

	\emph{Part IV. Matching monotone convergence.}
	Because $V^{(0)}=V$, the hypothesis gives
	\[
		\Pi(V^{(0)})\geq0
		\qquad\text{almost surely}\,.
	\]
	By the definition of $\Pi$,
	\[
		\Pi(V^{(0)})
		=
		\phi(V^{(0)})
		\bigl(
		V^{(0)}-F(\phi(V^{(0)}))
		\bigr)\,.
	\]

	On a full-probability event on which
	$\Pi(V^{(0)})\geq0$, consider the following three cases. If
	$\phi(V^{(0)})>0$, then
	\[
		V^{(0)}-F(\phi(V^{(0)}))\geq0
	\]
	and
	\[
		\phi_n(V^{(0)})\uparrow\phi(V^{(0)})\,.
	\]
	If $\phi(V^{(0)})<0$, then
	\[
		V^{(0)}-F(\phi(V^{(0)}))\leq0
	\]
	and
	\[
		\phi_n(V^{(0)})\downarrow\phi(V^{(0)})\,.
	\]
	Multiplication by the nonpositive quantity
	$V^{(0)}-F(\phi(V^{(0)}))$ reverses the latter order. If
	$\phi(V^{(0)})=0$, then both $\phi_n(V^{(0)})$ and
	$\Pi(V^{(0)})$ are zero. Consequently,
	\[
		0
		\leq
		\phi_n(V^{(0)})
		\bigl(
		V^{(0)}-F(\phi(V^{(0)}))
		\bigr)
		\uparrow
		\Pi(V^{(0)})
		\qquad\text{almost surely}\,.
	\]

	The monotone convergence theorem, with
	$\E[\Pi(V^{(0)})]$ initially understood as an extended expectation in
	$[0,\infty]$, therefore gives
	\[
		\begin{aligned}
			\E[\Pi(V)]
			 & =\E[\Pi(V^{(0)})] \\
			 & =
			\lim_{n\to\infty}
			\E\!\left[
				\phi_n(V^{(0)})
				\bigl(
				V^{(0)}-F(\phi(V^{(0)}))
				\bigr)
			\right]              \\
			 & =
			\frac12\lim_{n\to\infty}
			\E\!\left[
			\bigl(
			\phi_n(V^{(0)})-\phi_n(V^{(1)})
			\bigr)
			\bigl(V^{(0)}-V^{(1)}\bigr)
			\right]              \\
			 & =\Gamma\,.
		\end{aligned}
	\]
	Here the first equality uses $V^{(0)}=V$, the second follows from the
	monotone convergence theorem, the third follows from
	\eqref{eq:truncated-balance}, and the last follows from
	\eqref{eq:first-monotone-convergence}. Because the limit in
	Part~III is finite and $\Pi(V)\geq0$ almost surely,
	\[
		\E[\lvert\Pi(V)\rvert]
		=\E[\Pi(V)]
		=\Gamma
		<\infty\,.
	\]
	Thus, $\Pi(V)\in L^1$.
\end{proof}

\section{Noise-trader loss saturation}
\label{sec:covariance-saturation}

Retain the notation of Section~\ref{sec:quantities}. The following
proposition shows that, at equilibrium, the noise traders' expected
loss attains its sharp upper bound. Throughout this section, $P$
denotes the continuous inverse regression associated with $\phi$.
By the version clause of Lemma~\ref{lem:F-properties},
$(\phi,P)$ is an equilibrium if and only if
$(\phi,\widetilde P)$ is an equilibrium for any other Borel version
$\widetilde P$ of the inverse regression. By
Lemma~\ref{lem:ntloss}, the value of $\Gamma$ is also independent of
the chosen version.

\begin{proposition}[Covariance saturation]
	\label{prop:covariance-saturation}
	If $(\phi,P)$ is an equilibrium in the normalised model, then
	\[
		\Gamma=\frac12\,.
	\]
\end{proposition}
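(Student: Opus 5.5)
The plan is to show $\Gamma\geq\tfrac12$ and then invoke the upper bound in Proposition~\ref{prop:covariance-maximizers}. Since $(\phi,P)$ is an equilibrium, the version clause of Lemma~\ref{lem:F-properties} lets me rewrite the profit-maximisation condition as $\phi(v)\in\argmax_x x\bigl(v-F(x)\bigr)$. This puts me under the hypothesis of Lemma~\ref{lem:foc}, which gives four things: $\phi$ is strictly increasing, the first-order condition \eqref{eq:foc} holds, $W=\phi^2F'(\phi)$, and $W'=\phi$ almost everywhere. Lemma~\ref{lem:posterior} adds $F'>0$.

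\emph{Step 1: apply the accounting identity.} From \eqref{eq:foc} and \eqref{eq:W-identity},
\[
	\Pi(v)=\phi(v)\bigl(v-F(\phi(v))\bigr)=\phi(v)^2F'(\phi(v))=W(v)\geq0\,.
\]
So Proposition~\ref{prop:balance} applies and gives
\[
	\Gamma=\E[W(V)]=\E\bigl[\phi(V)^2F'(\phi(V))\bigr]\,.
\]
Lemma~\ref{lem:ntloss} independently gives $\Gamma=\E[F'(\phi(V))]$. I now have two expressions for $\Gamma$, weighted by the positive density $F'(\phi(V))$.

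\emph{Step 2: a Stein-type identity from the first-order condition.} Set $g:=F\circ\phi$. By Lemma~\ref{lem:resampling}\ref{item:resampling-smoothed-mean}, $g(V)=\E[V^{(1)}\mid V]$. Hence $\E[g(V)]=0$ and $\E[Vg(V)]=\E[V^{(0)}V^{(1)}]=\E[M^2]=:m$, where $M=\E[V\mid Y]$. The first-order condition reads
\[
	V-g(V)=\phi(V)F'(\phi(V))\,.
\]
Multiplying by $V$ and taking expectations gives
\[
	1-m=\E\bigl[V\phi(V)F'(\phi(V))\bigr]\,.
\]
Applying Cauchy--Schwarz under the weight $F'(\phi(V))$ and using Step 1 then gives
\[
	(1-m)^2\leq\Gamma\,\E\bigl[V^2F'(\phi(V))\bigr]\,.
\]
I also want a second relation linking $m$ to $\Gamma$. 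Expanding $\E[(2M-V-U)^2]=2-4\Gamma$ as in the proof of Proposition~\ref{prop:covariance-maximizers} should supply it, for instance through $\E[YM]=\E[\phi(V)M]+\Gamma$.

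\emph{Step 3: close the inequality with the Gaussian variance bound.} This is where Lemma~\ref{lem:lower-gaussian-variance} enters. I would apply it to bounded Lipschitz truncations of the monotone functions $W$ (with $W'=\phi$) and $g$, or of $v\mapsto v-g(v)$. Gaussian integration by parts gives $\E[f'(V)]=\E[Vf(V)]$, so the bound yields lower bounds on variances in terms of $m$ and $\Gamma$. Those variances are then bounded above using Step 1 and the fact that $\E[g(V)^2]\leq m$, by conditional Jensen's inequality. Truncation is removed by monotone convergence exactly as in Proposition~\ref{prop:balance}, using the monotonicity of $\phi$ and $g$.

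The goal is an inequality of the form $\Gamma\geq\tfrac12$, with the Kyle equilibrium, where everything is affine, as the equality case. Combined with $\Gamma\leq\tfrac12$ from \eqref{eq:covariance-bound}, this gives $\Gamma=\tfrac12$. The main obstacle is choosing the right test function in Step 3 so that the Cacoullos bound, the Cauchy--Schwarz step and the two representations of $\Gamma$ combine exactly into $\Gamma\geq\tfrac12$. My first attempt would be $f=W$, which exploits $W'=\phi$ together with $\E[W(V)]=\Gamma$. If that fails, I would try $f=\id-g$, where the first-order condition identifies $f$ with $\phi F'(\phi)$.
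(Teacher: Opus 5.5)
\textbf{There is a genuine gap: the proposal never proves $\Gamma\ge\frac12$.} Your Step~1 is correct and matches the paper. From $\Pi=W\ge0$, Proposition~\ref{prop:balance} gives $\Gamma=\E[W(V)]$, and Lemma~\ref{lem:ntloss} gives $\Gamma=\E[q(V)]$ with $q:=F'\circ\phi>0$. As you say yourself, the test function is left open, and neither Step~2 nor your candidates in Step~3 can supply it.

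\emph{Step~2.} There is no ``second relation''. In $\E[(2M-V-U)^2]=2-4\Gamma$, the term $\E[(2M-V)^2]$ equals $1$ identically because $\E[VM]=\E[M^2]$, so $m$ cancels. The identity $\E[YM]=\E[\phi(V)M]+\Gamma$ only introduces the new unknown $\E[\phi(V)M]$. Your bound $(1-m)^2\le\Gamma\,\E[V^2q(V)]$ involves $\E[V^2q(V)]$, which nothing controls.

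\emph{Step~3.} The choice $f=W$ gives $(\E[\phi(V)])^2\le\Var(W(V))$, which involves neither $\E[W(V)]$ nor $q$. At the Kyle equilibrium it reads $0\le\frac12$, so it cannot be a link in a chain ending in $\Gamma\ge\frac12$, since such a chain must be tight there. The fallback $f=\id-g$ gives $(1-m)^2\le\Var\bigl(V-g(V)\bigr)=1-2m+\E[g(V)^2]\le1-m$. This holds automatically for $m\in[0,1]$ and contains no $\Gamma$.

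\emph{The missing idea.} Choose $h$ so that \emph{both} representations of $\Gamma$ become second moments: one of $h$ and one of $1/h'$. Because $W=\phi^2q$ and $W'=\phi$ almost everywhere, the signed square root $h:=\operatorname{sgn}(\phi)\sqrt{2W}$ satisfies $h'=|\phi|/\sqrt{2W}=1/\sqrt{2q}=:r$ almost everywhere. Hence $\E[h(V)^2]=2\E[W(V)]=2\Gamma$ and $\E[r(V)^{-2}]=2\E[q(V)]=2\Gamma$. The Cacoullos bound and Cauchy--Schwarz then give
\[
1\le\E[r(V)]\,\E[r(V)^{-1}]\le\sqrt{\Var(h(V))}\,\sqrt{\E[r(V)^{-2}]}\le\sqrt{\E[h(V)^2]\,\E[r(V)^{-2}]}=2\Gamma\,.
\]

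\emph{What is still needed for rigour.} The paper supplies these steps.
\begin{enumerate}
	\item With $c:=F(0)$, strict monotonicity of $F$ forces $\phi(c)=0$. So $h$ equals $-\sqrt{2W}$ on $(-\infty,c)$ and $\sqrt{2W}$ on $(c,\infty)$.
	\item $h$ is locally Lipschitz across $c$. This holds because $F'$ is continuous and positive, so $q$ is bounded below on compacts.
	\item Lemma~\ref{lem:lower-gaussian-variance} is applied to the bounded Lipschitz truncations $h\circ T_N$, where $T_N(v)=(-N)\vee(v\wedge N)$. The bound $|h\circ T_N|\le|h|$ gives $L^2$ convergence of $h\circ T_N(V)$, and monotone convergence handles $\E[r(V)\mathbf 1_{\{|V|<N\}}]$.
\end{enumerate}
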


\begin{proof}
	By Lemma~\ref{lem:F-properties}, for every $x\in\R$,
	\[
		P(x+U)\in L^1\,,
		\qquad
		\E[P(x+U)]=F(x)\,.
	\]
	Hence, for every $v,x\in\R$,
	\[
		\E\bigl[(v-P(x+U))x\bigr]
		=x\bigl(v-F(x)\bigr)\,.
	\]
	Profit maximisation therefore gives
	\[
		\phi(v)\in\argmax_{x\in\R}x\bigl(v-F(x)\bigr)
		\qquad\text{for every }v\in\R\,.
	\]

	\medskip

	Lemma~\ref{lem:foc} shows that $\phi$ is strictly increasing. It
	also shows that $W$ is real-valued and locally absolutely
	continuous, with
	\[
		W'(v)=\phi(v)
		\qquad\text{for Lebesgue-almost every }v\in\R\,,
	\]
	and
	\[
		W(v)=\phi(v)^2F'(\phi(v))
		\qquad\text{for every }v\in\R\,.
	\]

	By the definition of $\Pi$ and pointwise maximisation,
	\[
		\Pi(v)
		=\phi(v)\bigl(v-F(\phi(v))\bigr)
		=W(v)
		\qquad\text{for every }v\in\R\,.
	\]
	Because $\phi$ is nondecreasing and
	\[
		\Pi(v)=W(v)\geq0
		\qquad\text{for every }v\in\R\,,
	\]
	Proposition~\ref{prop:balance} applies. It gives
	$\Pi(V)\in L^1$ and
	\[
		\Gamma
		=\E[\Pi(V)]
		=\E[W(V)]\,.
	\]
	In particular,
	\[
		W(V)\in L^1
		\qquad\text{and}\qquad
		\Gamma<\infty\,.
	\]

	\medskip

	Lemma~\ref{lem:posterior} gives
	\[
		F'(x)>0
		\qquad\text{for every }x\in\R\,.
	\]
	Define
	\[
		q(v):=F'(\phi(v))
		\qquad\text{for every }v\in\R\,.
	\]
	Then $q\colon\R\to(0,\infty)$ is Borel measurable, and
	\[
		W(v)=\phi(v)^2q(v)
		\qquad\text{for every }v\in\R\,.
	\]

	By Lemma~\ref{lem:ntloss},
	\[
		q(V)\in L^1
		\qquad\text{and}\qquad
		\Gamma=\E[q(V)]\,.
	\]
	Because $q(V)>0$ almost surely, $\Gamma>0$.
	\medskip

	Set
	\[
		c:=F(0)\,.
	\]
	Because $F$ is strictly increasing,
	\[
		x\bigl(c-F(x)\bigr)<0
		\qquad\text{for every }x\ne 0\,,
	\]
	whereas the expression vanishes at $x=0$. Thus, $0$ is the unique
	maximiser of
	\[
		x\longmapsto x\bigl(c-F(x)\bigr)\,.
	\]
	The pointwise maximisation condition at $v=c$ therefore gives
	\[
		\phi(c)=0\,.
	\]
	Because $\phi$ is strictly increasing,
	\[
		\phi(v)<0
		\qquad\text{for every }v<c\,,
	\]
	and
	\[
		\phi(v)>0
		\qquad\text{for every }v>c\,.
	\]

	The identity $W=\phi^2q$ gives
	\[
		W(c)=0
	\]
	and
	\[
		W(v)>0
		\qquad\text{for every }v\ne c\,.
	\]
	Define $h\colon\R\to\R$ by
	\[
		h(v):=
		\begin{cases}
			-\sqrt{2W(v)}\,, & v<c\,, \\
			0\,,             & v=c\,, \\
			\sqrt{2W(v)}\,,  & v>c\,.
		\end{cases}
	\]

	Let $J$ be a nonempty compact interval contained in either
	$(-\infty,c)$ or $(c,\infty)$, and let $s\in\{-1,1\}$ be the
	constant sign of $\phi$ on $J$. We have
	\[
		W(v)=\phi(v)^2q(v)>0
		\qquad\text{for every }v\in J\,.
	\]
	Because $W$ is absolutely continuous on $J$ and $2W(J)$ is a
	compact subset of $(0,\infty)$, on which the square root is
	Lipschitz, the function $h=s\sqrt{2W}$ is absolutely continuous
	on $J$. Using $W'=\phi$ almost everywhere, the chain rule gives
	\[
		\begin{aligned}
			h'(v)
			 & =
			\frac{s\phi(v)}{\sqrt{2W(v)}}  \\
			 & =
			\frac{|\phi(v)|}{\sqrt{2W(v)}} \\
			 & =
			\frac{1}{\sqrt{2q(v)}}
		\end{aligned}
	\]
	for Lebesgue-almost every $v\in J$.

	Define
	\[
		r(v):=\frac{1}{\sqrt{2q(v)}}
		\qquad\text{for every }v\in\R\,.
	\]
	Then $r$ is Borel measurable and everywhere positive. We next show
	that $h$ is locally absolutely continuous, including at $c$, and
	that $r$ is a representative of its almost-everywhere derivative.
	Let $K=[a,b]$ be a compact interval containing $c$. Because $\phi$ is
	increasing and $F'$ is continuous and everywhere positive,
	\[
		m_K:=\min_{x\in[\phi(a),\phi(b)]}F'(x)>0\,.
	\]
	Thus,
	\[
		q(v)\geq m_K
		\qquad\text{and}\qquad
		r(v)\leq\frac{1}{\sqrt{2m_K}}
		\qquad\text{for every }v\in K\,.
	\]
	The preceding absolute-continuity and derivative calculation show
	that $h$ is $(2m_K)^{-1/2}$-Lipschitz on each compact subinterval
	of $K\setminus\{c\}$. Moreover, $W$ is continuous and $W(c)=0$, so
	$h$ is continuous at $c$. Letting an endpoint tend to $c$, and then
	using the triangle inequality for points on opposite sides of $c$,
	shows that $h$ is $(2m_K)^{-1/2}$-Lipschitz on $K$. Hence $h$ is
	locally absolutely continuous on $\R$, and
	\[
		h'(v)=r(v)
	\]
	for Lebesgue-almost every $v\in\R$. Because $r>0$, the function $h$
	is strictly increasing.

	Moreover,
	\[
		h(v)^2=2W(v)
		\qquad\text{for every }v\in\R\,,
	\]
	and
	\[
		\frac{1}{r(v)^2}=2q(v)
		\qquad\text{for every }v\in\R\,.
	\]
	Consequently,
	\[
		\E[h(V)^2]
		=2\E[W(V)]
		=2\Gamma<\infty
	\]
	and
	\[
		\E\!\left[\frac{1}{r(V)^2}\right]
		=2\E[q(V)]
		=2\Gamma<\infty\,.
	\]

	For every integer $N>|c|$, define
	\[
		T_N(v):=(-N)\vee(v\wedge N),
		\qquad
		h_N:=h\circ T_N\,.
	\]
	The function $h_N$ is bounded and Lipschitz, and
	\[
		h_N'(v)=r(v)\mathbf 1_{(-N,N)}(v)
	\]
	for Lebesgue-almost every $v\in\R$.
	Lemma~\ref{lem:lower-gaussian-variance} therefore gives
	\[
		\left(
		\E\!\left[r(V)\mathbf 1_{\{|V|<N\}}\right]
		\right)^2
		\leq
		\Var\bigl(h_N(V)\bigr)\,.
	\]
	Because $h$ is increasing and $h(c)=0$, we have
	$|h_N(v)|\leq|h(v)|$ for every $v\in\R$ and every $N>|c|$.
	Thus, $h_N(V)\to h(V)$ in $L^2$, and hence
	$\Var(h_N(V))\to\Var(h(V))$. Letting $N\to\infty$ and using
	monotone convergence on the left gives
	\[
		\bigl(\E[r(V)]\bigr)^2
		\leq
		\Var\bigl(h(V)\bigr)\,.
	\]
	In particular, $r(V)\in L^1$.

	Moreover, Cauchy--Schwarz and the preceding moment bound give
	\[
		\E\!\left[\frac{1}{r(V)}\right]
		\leq
		\sqrt{\E\!\left[\frac{1}{r(V)^2}\right]}
		<\infty\,.
	\]
	Combining the preceding variance bound with Cauchy--Schwarz now gives
	\[
		\begin{aligned}
			1
			 & =
			\left(
			\E\!\left[
				\sqrt{r(V)}\frac{1}{\sqrt{r(V)}}
				\right]
			\right)^2                                \\
			 & \leq
			\E[r(V)]\,
			\E\!\left[\frac{1}{r(V)}\right]          \\
			 & \leq
			\sqrt{\Var\bigl(h(V)\bigr)}
			\sqrt{\E\!\left[\frac{1}{r(V)^2}\right]} \\
			 & \leq
			\sqrt{
				\E[h(V)^2]
				\E\!\left[\frac{1}{r(V)^2}\right]
			}                                        \\
			 & =
			2\sqrt{\E[W(V)]\,\E[q(V)]}
			=2\Gamma\,.
		\end{aligned}
	\]
	Thus, $\Gamma\geq1/2$. The reverse inequality follows from
	\eqref{eq:covariance-bound}. Therefore,
	\[
		\Gamma=\frac12\,.
	\]
\end{proof}

\section{Rigidity and equilibrium uniqueness}
\label{sec:rigidity}

The well-known Gaussian regression formula shows that the identity map satisfies
the pointwise maximisation condition below. The content of
the next theorem is the converse: among all Borel-measurable maps, this
condition forces the identity.

\begin{theorem}[Rigidity]
	\label{thm:rigidity}
	Let $V$ and $U$ be independent standard normal random variables.
	For every Borel-measurable map $\phi\colon\R\to\R$, let $P_\phi\colon\R\to\R$
	be a Borel version of the inverse regression
	\[
		P_\phi(y)=\E[V\mid\phi(V)+U=y]\,,
	\]
	and let
	\[
		F_\phi(x):=\E[P_\phi(x+U)]\,.
	\]
	The pointwise maximisation condition
	\[
		\phi(v)\in\argmax_{x\in\R}
		\bigl(xv-xF_\phi(x)\bigr)
		\qquad\text{for every }v\in\R
	\]
	holds if and only if $\phi=\id$.
\end{theorem}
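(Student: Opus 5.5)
The "if" direction is a direct computation. If $\phi=\id$, then $(V,Y)$ with $Y=V+U$ is jointly Gaussian, so every Borel version satisfies $P_\phi(y)=y/2$ for Lebesgue-almost every $y$. By the version clause of Lemma~\ref{lem:F-properties}, $F_\phi(x)=\E[(x+U)/2]=x/2$ for every $x$. The objective is therefore $x(v-x/2)$, which is strictly concave with unique maximiser $x=v$. Hence the condition holds.

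For the "only if" direction, first replace $P_\phi$ by the continuous version $P$ of Lemma~\ref{lem:P-properties}. By Lemma~\ref{lem:F-properties}, $F_\phi=F$ does not depend on the version chosen. The hypothesis then says exactly that $(\phi,P)$ is an equilibrium in the normalised model, so Proposition~\ref{prop:covariance-saturation} gives $\Gamma=\tfrac12$. The equality case of Proposition~\ref{prop:covariance-maximizers} then yields $a\in\R$ with $\phi(v)=v+a$ for Lebesgue-almost every $v$. Since $V$ has a density, $Y=V+U+a$ almost surely. Gaussian regression gives $P(y)=(y-a)/2$ almost everywhere, and since $P$ is continuous this holds everywhere. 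Consequently $F(x)=(x-a)/2$ for every $x$.

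Now use the pointwise maximisation condition at every $v$, not just almost every $v$. The objective is $x\bigl(v-(x-a)/2\bigr)$, which is strictly concave with unique maximiser $x=v+a/2$. So $\phi(v)=v+a/2$ for every $v$. Comparing this with $\phi(v)=v+a$ on a set of full measure forces $a/2=a$, that is, $a=0$. Hence $\phi=\id$ everywhere.

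The substantive work sits in Proposition~\ref{prop:covariance-saturation} and Proposition~\ref{prop:covariance-maximizers}, which are already available. The only delicate step remaining is upgrading the almost-everywhere conclusion to an everywhere one. This works because $F$ is determined by $\phi$ only through its almost-everywhere class, and the unique maximiser of an explicitly concave objective then pins $\phi$ down pointwise. No other obstacle is expected.
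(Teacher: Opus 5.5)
Your proposal is correct and follows the paper's proof. You reduce to the continuous version, invoke Propositions~\ref{prop:covariance-saturation} and~\ref{prop:covariance-maximizers} to get $\phi(v)=v+a$ almost everywhere, and then use $F(x)=(x-a)/2$ with its unique maximiser $v+a/2$ to force $a=0$. The only difference is that the paper first upgrades the almost-everywhere identity to an everywhere one via the strict monotonicity in Lemma~\ref{lem:foc}, whereas you read off $\phi(v)=v+a/2$ for every $v$ directly from pointwise maximisation, which makes that upgrade unnecessary.
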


\begin{proof}
	Suppose first that the pointwise maximisation condition holds. Let
	$P$ be the continuous inverse regression associated with $\phi$.
	By the version clause of Lemma~\ref{lem:F-properties},
	\[
		\E[P(x+U)]=F_\phi(x)
		\qquad\text{for every }x\in\R\,.
	\]
	Thus, the pointwise maximisation condition and market efficiency
	show that $(\phi,P)$ is an equilibrium of the standardised model.
	By Lemma~\ref{lem:foc}, the map $\phi$ is strictly increasing, while
	Propositions~\ref{prop:covariance-saturation}
	and~\ref{prop:covariance-maximizers} give
	\[
		\phi(v)=v+a
		\qquad\text{for Lebesgue-almost every }v\in\R
	\]
	for some $a\in\R$. Because $\phi$ is increasing and agrees almost
	everywhere with the continuous function $v\mapsto v+a$, this
	equality holds for every $v\in\R$.

	Gaussian regression now gives
	\[
		F_\phi(x)=\frac{x-a}{2}
		\qquad\text{for every }x\in\R\,.
	\]
	Hence the unique maximiser of
	\[
		x\bigl(v-F_\phi(x)\bigr)
		=
		xv-\frac{x^2}{2}+\frac{a}{2}x
	\]
	is $v+a/2$. Pointwise maximisation therefore gives
	$v+a=v+a/2$ for every $v$, so $a=0$ and $\phi=\id$.

	Conversely, if $\phi=\id$, then Gaussian regression gives
	$F_\phi(x)=x/2$, and $x(v-x/2)$ is uniquely maximised at $x=v$.
\end{proof}
We now prove the main theorem. It is a translation and rescaling of the
rigidity theorem.

\begin{proof}[Proof of Theorem~\ref{thm:kyle}]
	Define
	\[
		\widehat V:=\frac{V-\mu}{\tau},
		\qquad
		\widehat U:=\frac{U}{\sigma},
		\qquad
		\widehat\phi(t):=\frac{\phi(\mu+\tau t)}{\sigma},
		\qquad
		\widehat P(y):=\frac{P(\sigma y)-\mu}{\tau}\,.
	\]
	Then $\widehat V$ and $\widehat U$ are independent standard normal
	random variables, and
	\[
		\widehat\phi(\widehat V)+\widehat U
		=\frac{Y_\phi}{\sigma}\,.
	\]
	A direct change of variables shows that $(\phi,P)$ is an equilibrium
	if and only if $(\widehat\phi,\widehat P)$ is an equilibrium of the
	standardised model: market efficiency is preserved, and expected
	profits are multiplied by the positive constant $\sigma\tau$.

	By Theorem~\ref{thm:rigidity} and Gaussian regression, this holds if
	and only if
	\[
		\widehat\phi=\id
		\qquad\text{and}\qquad
		\widehat P\bigl(\widehat\phi(\widehat V)+\widehat U\bigr)
		=
		\frac{\widehat\phi(\widehat V)+\widehat U}{2}
		\quad\text{almost surely}\,.
	\]
	Undoing the normalisation gives
	\[
		\phi(v)=\frac{\sigma}{\tau}(v-\mu)
		\qquad\text{for every }v\in\R
	\]
	and
	\[
		P(Y_\phi)
		=
		\mu+\frac{\tau}{2\sigma}Y_\phi
		\qquad\text{almost surely}\,.
	\]
\end{proof}

\begingroup
\renewcommand{\addcontentsline}[3]{}
\begin{acks}[Acknowledgments]
	We gratefully acknowledge Thomas Anton, who worked as a research
	assistant at the Australian National University in 2022-2023, for
	assistance with two preliminary aspects of the paper: the regularity
	of $P$ and $F$, and the conditional-replica construction based on
	Kallenberg's transfer theorem.
\end{acks}
\endgroup

\appendix

\section{Proofs of the preliminary lemmas}
\label{app:proofs-preliminaries}

\begin{proof}[Proof of Lemma~\ref{lem:densities}]
	Because $U$ and $V$ are independent, the pair $(V,U)$ has density
	$\varrho(v)\varrho(u)$. For every bounded Borel-measurable function
	$g\colon\R^2\to\R$, substitution of $y=\phi(v)+u$ at fixed $v$
	gives
	\[
		\begin{aligned}
			\E[g(V,Y)]
			 & =\int_\R\int_\R
			g\bigl(v,\phi(v)+u\bigr)\varrho(v)\varrho(u)
			\,\dd u\,\dd v     \\
			 & =\int_\R\int_\R
			g(v,y)\varrho(v)\varrho\bigl(y-\phi(v)\bigr)
			\,\dd y\,\dd v\,.
		\end{aligned}
	\]
	This identifies the joint density. Integrating out $v$ and applying
	Tonelli's theorem gives the density $p_Y$ of $Y$. Moreover, the
	integrand defining $p_Y$ is everywhere positive and bounded by
	$(2\pi)^{-1/2}\varrho(v)$. Thus, for every $y\in\R$,
	\[
		0<p_Y(y)
		\leq\frac{1}{\sqrt{2\pi}}\int_\R\varrho(v)\,\dd v
		=\frac{1}{\sqrt{2\pi}}\,;
	\]
	in particular, $p_Y$ is finite and everywhere positive.
\end{proof}

\begin{proof}[Proof of Lemma~\ref{lem:P-properties}]
	Define, locally in this proof,
	\[
		n(y):=\int_\R v\varrho(v)\varrho\bigl(y-\phi(v)\bigr)\,\dd v,
		\qquad
		q(y):=\int_\R|v|\varrho(v)
		\varrho\bigl(y-\phi(v)\bigr)\,\dd v\,.
	\]
	The integrands of $n$ and $q$ are dominated by
	$(2\pi)^{-1/2}|v|\varrho(v)$, and that of $p_Y$ by
	$(2\pi)^{-1/2}\varrho(v)$, uniformly in $y$. Because $\varrho$ is
	continuous, dominated convergence shows that $n$, $q$, and $p_Y$
	are continuous. Because $p_Y$ is everywhere positive, $P=n/p_Y$
	is continuous.

	Let $g\colon\R\to\R$ be bounded and Borel measurable. Because the
	joint law of $(V,Y)$ has the density in
	Lemma~\ref{lem:densities} and
	\[
		\int_\R\int_\R|vg(y)|\varrho(v)
		\varrho\bigl(y-\phi(v)\bigr)\,\dd y\,\dd v
		\leq\sup|g|\,\E[|V|]<\infty\,,
	\]
	Fubini's theorem gives
	\[
		\E[Vg(Y)]
		=\int_\R g(y)n(y)\,\dd y
		=\int_\R g(y)P(y)p_Y(y)\,\dd y
		=\E\bigl[g(Y)P(Y)\bigr]\,.
	\]
	Moreover, Tonelli's theorem gives
	$\int_\R q(y)\,\dd y=\E[|V|]$, so
	\[
		\E\bigl[|P(Y)|\bigr]
		=\int_\R|n(y)|\,\dd y
		\leq\int_\R q(y)\,\dd y
		<\infty\,.
	\]
	Thus, $P(Y)=\E[V\mid Y]$ almost surely. Conditional Jensen's inequality
	gives
	\[
		\E[P(Y)^2]\leq\E[V^2]=1\,.
	\]
\end{proof}

\begin{proof}[Proof of Lemma~\ref{lem:F-properties}]
	We first prove that $P(x+U)\in L^1$ for every $x\in\R$. Define,
	locally in this proof,
	\[
		q(y):=\int_\R|v|\varrho(v)
		\varrho\bigl(y-\phi(v)\bigr)\,\dd v
		\qquad\text{for every }y\in\R\,.
	\]
	Let $\nu$ be the law of $\phi(V)$, so that
	\[
		p_Y(y)=\int_\R\varrho(y-a)\,\nu(\dd a)\,.
	\]
	For fixed $x\in\R$, define
	\[
		K_x(a):=\int_\R
		\frac{\varrho(y-x)\varrho(y-a)}{p_Y(y)}\,\dd y
		\qquad\text{for every }a\in\R\,.
	\]
	Because $|P|\leq q/p_Y$, Tonelli's theorem gives
	\[
		\begin{aligned}
			\E\bigl[|P(x+U)|\bigr]
			 & =\int_\R|P(y)|\varrho(y-x)\,\dd y \\
			 & \leq\int_\R\frac{q(y)}{p_Y(y)}
			\varrho(y-x)\,\dd y                  \\
			 & =\E\bigl[|V|K_x(\phi(V))\bigr]\,.
		\end{aligned}
	\]
	We claim that $K_x$ is bounded outside a $\nu$-null set. Suppose
	first that $\nu((x,\infty))>0$. Choose $x<\alpha<\beta$ such that
	$r=\nu([\alpha,\beta])>0$, and put $m=(\alpha+\beta)/2$. For
	$y\geq m$ and $a\in[\alpha,\beta]$, we have
	$|y-a|\leq|y-\alpha|$, so
	$p_Y(y)\geq r\varrho(y-\alpha)$. Similarly,
	$p_Y(y)\geq r\varrho(y-\beta)$ for $y<m$. Thus, for every
	$a\geq x$,
	\[
		K_x(a)\leq\frac1r\sum_{d\in\{\alpha,\beta\}}
		\int_\R\frac{\varrho(y-x)\varrho(y-a)}{\varrho(y-d)}
		\,\dd y\,.
	\]
	Completing the square gives
	\[
		\int_\R\frac{\varrho(y-x)\varrho(y-a)}{\varrho(y-d)}
		\,\dd y=e^{(x-d)(a-d)}\,.
	\]
	Because $d>x$ and $a\geq x$, the exponent is at most $(x-d)^2$.
	Thus, $K_x$ is bounded on $[x,\infty)$. The symmetric argument gives boundedness on $(-\infty,x]$ whenever
	$\nu((-\infty,x))>0$. If neither open half-line has positive
	$\nu$-measure, then $\nu=\delta_x$ and $K_x(x)=1$. This proves the
	claim. Consequently, $K_x(\phi(V))$ is almost surely bounded, and
	\[
		\E\bigl[|P(x+U)|\bigr]\leq\E\bigl[|V|K_x(\phi(V))\bigr]<\infty\,.
	\]
	Because $x+U$ has density $y\mapsto\varrho(y-x)$, the definition
	\eqref{eq:F-definition} reads
	\[
		F(x)=\int_\R P(y)\varrho(y-x)\,\dd y\,.
	\]
	This proves \eqref{eq:F-convolution}.

	If $\widetilde P$ is as in the statement, then
	$\widetilde P(Y)=P(Y)$ almost surely, so $\widetilde P=P$ almost
	everywhere under the law of $Y$. Because $p_Y>0$ everywhere by
	Lemma~\ref{lem:densities}, the equality holds Lebesgue-almost
	everywhere. The law of $x+U$ is absolutely continuous, so
	$\widetilde P(x+U)=P(x+U)$ almost surely. In particular,
	$\widetilde P(x+U)\in L^1$ and $\E[\widetilde P(x+U)]=F(x)$.

	It remains to differentiate the convolution. Fix $R>0$ and choose
	$A>R$. There is a constant $C=C(R,A)$ such that
	\[
		(1+|y|)e^{R|y|}\leq C\bigl(e^{Ay}+e^{-Ay}\bigr)\qquad\text{for every }y\in\R\,.
	\]
	Because $\varrho(y-x)\leq\varrho(y)e^{|x||y|}$ and
	$e^{\pm Ay}\varrho(y)=e^{A^2/2}\varrho(y\mp A)$, uniformly for
	$|x|\leq R$, both $|P(y)|\varrho(y-x)$ and
	$|P(y)(y-x)|\varrho(y-x)$ are bounded by a constant multiple of
	\[
		|P(y)|\bigl(\varrho(y-A)+\varrho(y+A)\bigr)\,.
	\]
	This function is integrable by the shifted integrability proved
	above, applied at $x=A$ and $x=-A$. Differentiation under the
	integral sign is therefore valid locally uniformly in $x$. This
	gives the first formula in \eqref{eq:F-derivative} and proves that
	$F\in C^1(\R)$; the second formula follows by substituting
	$y=x+u$.

	Finally, because $P(Y)=\E[V\mid Y]$ almost surely and
	$Y=\phi(V)+U$, independence of $U$ and $V$ gives
	\[
		\begin{aligned}
			\E[\E[V\mid Y]\mid V]
			 & =\E[P(Y)\mid V]         \\
			 & =\E[P(\phi(V)+U)\mid V] \\
			 & =F(\phi(V))
			\qquad\text{almost surely}\,.
		\end{aligned}
	\]
\end{proof}

\begin{proof}[Proof of Lemma~\ref{lem:foc}]
	By hypothesis, $\phi(v)$ attains the supremum defining $W(v)$; in
	particular, $W$ is real-valued. For every $v,w\in\R$,
	\begin{equation}
		W(w)\geq W(v)+\phi(v)(w-v)\,.
		\label{eq:supporting-line}
	\end{equation}
	For fixed $v$, the function $x\mapsto x(v-F(x))$ is continuously
	differentiable by Lemma~\ref{lem:F-properties} and is maximised at
	$x=\phi(v)$. Its derivative vanishes there, proving
	\eqref{eq:foc}. Substitution into
	$W(v)=\phi(v)(v-F(\phi(v)))$ gives \eqref{eq:W-identity}.

	A finite convex function on $\R$ is locally absolutely continuous
	and differentiable almost everywhere
	\citep[Sections~23--25]{Rockafellar70}. Inequality
	\eqref{eq:supporting-line} states that $\phi(v)$ is a subgradient
	of $W$ at $v$. At every point where $W$ is differentiable, the
	subgradient is unique and equals $W'(v)$. Therefore,
	\eqref{eq:W-prime} holds.

	Let $v_1<v_2$. Applying \eqref{eq:supporting-line} in both
	directions and adding the two inequalities gives
	\[
		\bigl(\phi(v_2)-\phi(v_1)\bigr)(v_2-v_1)\geq 0\,.
	\]
	Thus, $\phi$ is nondecreasing. If $\phi(v_1)=\phi(v_2)$, then
	\eqref{eq:foc} gives $v_1=v_2$, a contradiction. Therefore,
	$\phi$ is strictly increasing.
\end{proof}

\begin{proof}[Proof of Lemma~\ref{lem:posterior}]
	Define, locally in this proof,
	\[
		n(y):=\int_\R v\varrho(v)
		\varrho\bigl(y-\phi(v)\bigr)\,\dd v
		\qquad\text{for every }y\in\R\,,
	\]
	so that $P=n/p_Y$ by the definition of the canonical version of
	the inverse regression. On compact $y$-sets, differentiation under
	the integral sign is justified because $\varrho$ and
	$|t|\varrho(t)$ are bounded, while $|v|\varrho(v)$ is integrable.
	Thus, $p_Y,n\in C^1(\R)$. Because $p_Y$ is everywhere positive by
	Lemma~\ref{lem:densities}, the function $P=n/p_Y$ is continuously
	differentiable.

	The required second moments under $\pi_y$ are locally uniformly
	bounded. Indeed, because $\varrho\leq(2\pi)^{-1/2}$,
	\[
		\int_\R v^2\varrho(v)\varrho\bigl(y-\phi(v)\bigr)\,\dd v
		\leq(2\pi)^{-1/2}\int_\R v^2\varrho(v)\,\dd v
		=(2\pi)^{-1/2}\,.
	\]
	Moreover, if $|y|\leq M$ and $t=y-\phi(v)$, then
	\[
		\phi(v)^2\varrho\bigl(y-\phi(v)\bigr)
		=(y-t)^2\varrho(t)
		\leq2M^2\varrho(t)+2t^2\varrho(t)\,.
	\]
	The right-hand side is bounded uniformly in $t$. Because $p_Y$ is
	continuous and everywhere positive, it is bounded away from zero
	on compact sets. It follows that $\id,\phi\in L^2(\pi_y)$ for every
	$y\in\R$.

	Because
	\[
		\partial_y\varrho\bigl(y-\phi(v)\bigr)
		=\bigl(\phi(v)-y\bigr)
		\varrho\bigl(y-\phi(v)\bigr)\,,
	\]
	we obtain
	\[
		\frac{p_Y'(y)}{p_Y(y)}=\E_{\pi_y}[\phi-y],\qquad
		\frac{n'(y)}{p_Y(y)}
		=\E_{\pi_y}\bigl[\id\,(\phi-y)\bigr]\,.
	\]
	The quotient rule gives
	\[
		P'(y)
		=\E_{\pi_y}\bigl[\id\,(\phi-y)\bigr]
		-\E_{\pi_y}[\id]\,\E_{\pi_y}[\phi-y]
		=\Cov_{\pi_y}(\id,\phi)\,.
	\]
	The last equality holds because adding a constant to the second
	argument leaves the covariance unchanged. Symmetrising the
	covariance gives
	\[
		P'(y)=\frac12\int_\R\int_\R
		(v-w)\bigl(\phi(v)-\phi(w)\bigr)
		\,\pi_y(\dd v)\,\pi_y(\dd w)\,.
	\]
	The density of $\pi_y$ is everywhere positive. Because $\phi$ is
	strictly increasing by Lemma~\ref{lem:foc}, the integrand is
	strictly positive off the diagonal, while the diagonal has zero
	$(\pi_y\otimes\pi_y)$-measure. This proves
	\eqref{eq:P-prime-cov}.

	Let $U_0,U_1$ be independent standard normal random variables.
	By \eqref{eq:F-derivative} of Lemma~\ref{lem:F-properties},
	\[
		F'(x)=\E[U_0P(x+U_0)]\,.
	\]
	The absolute convergence in Lemma~\ref{lem:F-properties} gives
	\[
		U_iP(x+U_i)\in L^1
		\qquad\text{for every }i\in\{0,1\}\,.
	\]
	The same lemma gives $P(x+U_i)\in L^1$. Hence, by independence,
	the cross terms $U_iP(x+U_j)$ are integrable and have expectation
	zero whenever $i\ne j$. Symmetrisation therefore gives
	\[
		2F'(x)
		=\E\bigl[(U_0-U_1)
			\bigl(P(x+U_0)-P(x+U_1)\bigr)\bigr]\,.
	\]
	Because $P$ is strictly increasing, the integrand is nonnegative
	and is strictly positive whenever $U_0\ne U_1$. Because
	$\mathbb P(U_0\ne U_1)=1$, this proves
	\eqref{eq:F-prime-positive}.

	Thus, $P$ and $F$ are strictly increasing.
\end{proof}

\begin{proof}[Proof of Lemma~\ref{lem:resampling}]
	Write $(\Omega,\mathcal F,\mathbb P)$ for the original probability
	space, and let $y\mapsto\pi_y$ be the conditional-law kernel defined above.
	The transfer theorem \citep[Theorem~8.17]{Kallenberg21} guarantees the
	existence of the required conditional replica. In the present setting,
	that consequence can be realised directly from the kernel $\pi$.

	Set
	\[
		\overline\Omega:=\Omega\times\R,
		\qquad
		\overline{\mathcal F}:=\mathcal F\otimes\mathcal B(\R)\,,
	\]
	and define a probability measure on this product space by
	\[
		\overline{\mathbb P}(A)
		:=\int_\Omega\int_\R
		\mathbf 1_A(\omega,v_1)\,\pi_{Y(\omega)}(\dd v_1)
		\,\mathbb P(\dd\omega),
		\qquad A\in\overline{\mathcal F}\,.
	\]
	We identify each original random variable with its lift to
	$\overline\Omega$, set $V^{(0)}:=V$, and define
	\[
		V^{(1)}(\omega,v_1):=v_1\,.
	\]
	By construction, the conditional law of $V^{(1)}$ given the lifted
	sigma-algebra $\mathcal F$ is $\pi_Y$. In particular,
	\[
		\mathcal L(V^{(1)}\mid Y,V^{(0)})=\pi_Y
		\qquad\text{almost surely}\,.
	\]
	The conditional law of $V^{(0)}=V$ given $Y$ is also $\pi_Y$. Hence,
	for bounded Borel functions $f,g\colon\R\to\R$,
	\[
		\begin{aligned}
			\E[f(V^{(0)})g(V^{(1)})\mid Y]
			 & =\E\!\left[
				f(V^{(0)})\int_\R g(v)\,\pi_Y(\dd v)
			\mathrel{\Big|}Y\right]                     \\
			 & =\left(\int_\R f(v)\,\pi_Y(\dd v)\right)
			\left(\int_\R g(v)\,\pi_Y(\dd v)\right)
			\qquad\text{almost surely}\,.
		\end{aligned}
	\]
	Thus, conditionally on $Y$, the variables $V^{(0)}$ and $V^{(1)}$
	are independent and both have law $\pi_Y$. Equivalently,
	\[
		\mathcal L\bigl((V^{(0)},V^{(1)})\mid Y\bigr)
		=\pi_Y\otimes\pi_Y
		\qquad\text{almost surely}\,.
	\]

	The product kernel is invariant under interchange of its coordinates,
	so
	\[
		(V^{(0)},V^{(1)},Y)
		\overset{d}{=}
		(V^{(1)},V^{(0)},Y)\,.
	\]
	This proves~\ref{item:resampling-exchangeable}. Integrating out
	$V^{(0)}$ in the conditional product law gives
	\[
		(V^{(1)},Y)\overset{d}{=}(V,Y)\,,
	\]
	which proves~\ref{item:resampling-marginal}. In particular,
	$V^{(1)}\overset{d}{=}V$, so $V^{(1)}\in L^2$.

	For~\ref{item:resampling-posterior-mean}, the conditional law identified
	above gives
	\[
		\E[V^{(1)}\mid Y,V^{(0)}]
		=\int_\R v\,\pi_Y(\dd v)
		=P(Y)
		\qquad\text{almost surely}\,.
	\]
	Finally, the tower property and~\ref{item:resampling-posterior-mean}
	give
	\[
		\begin{aligned}
			\E[V^{(1)}\mid V^{(0)}]
			 & =\E\!\left[
			\E[V^{(1)}\mid Y,V^{(0)}]
			\mathrel{\Big|}V^{(0)}\right]  \\
			 & =\E[P(Y)\mid V^{(0)}]       \\
			 & =F\bigl(\phi(V^{(0)})\bigr)
			\qquad\text{almost surely}\,.
		\end{aligned}
	\]
	The last equality is the final assertion of
	Lemma~\ref{lem:F-properties}. This proves~\ref{item:resampling-smoothed-mean}.
\end{proof}

\begin{proof}[Proof of Lemma~\ref{lem:lower-gaussian-variance}]
	Let $\varrho$ denote the standard Gaussian density. Because $f$ is
	Lipschitz, it is locally absolutely continuous, its almost-everywhere
	derivative is bounded, and
	\[
		\int_\R |f'(v)|\varrho(v)\,\dd v<\infty\,.
	\]
	For every $R>0$, integration by parts gives
	\[
		\int_{-R}^R f'(v)\varrho(v)\,\dd v
		=
		\bigl[f(v)\varrho(v)\bigr]_{-R}^R
		-
		\int_{-R}^R f(v)\varrho'(v)\,\dd v\,.
	\]
	Because $f$ is bounded and $\varrho(v)\to0$ as $|v|\to\infty$,
	the boundary term tends to zero. Moreover,
	$\varrho'(v)=-v\varrho(v)$, and $v\mapsto |vf(v)|\varrho(v)$ is
	integrable. Dominated convergence therefore gives
	\[
		\E[f'(V)]=\E[Vf(V)]\,.
	\]
	Because $\E[V]=0$,
	\[
		\E[Vf(V)]
		=
		\E\!\left[V\bigl(f(V)-\E[f(V)]\bigr)\right]\,.
	\]
	The Cauchy--Schwarz inequality and $\E[V^2]=1$ now give
	\[
		\bigl|\E[f'(V)]\bigr|
		\leq
		\sqrt{\Var\bigl(f(V)\bigr)}\,.
	\]
	Squaring proves the result.
\end{proof}

\section{From almost-everywhere to pointwise maximisation}
\label{subsec:ae-maximisation}

We return to the setting of Section~\ref{sec:intro}, where
$V\sim\mathcal N(\mu,\tau^2)$ and
$U\sim\mathcal N(0,\sigma^2)$ are independent.
We say that $(\phi,P)$ is a \emph{quasi-equilibrium} if the following conditions
hold:
\begin{enumerate}
	\item (almost everywhere profit maximisation)\label{item:aeprofit-max}
	      for Lebesgue-almost every $v\in\R$,
	      \[
		      \E\bigl[\bigl(v-P(\phi(v)+U)\bigr)\phi(v)\bigr]
		      \geq\E\bigl[\bigl(v-P(x+U)\bigr)x\bigr]\qquad\text{for every }x\in\R\,;
	      \]

	\item (market efficiency)
	      $P(Y_\phi)=\E[V\mid Y_\phi]$ almost surely.
\end{enumerate}

\begin{lemma}
	\label{lem:ae-maximisation}
	If $(\phi,P)$ is a quasi-equilibrium, then
	\begin{equation}
		\phi(v)=\frac{\sigma}{\tau}(v-\mu)
		\qquad\text{for Lebesgue-almost every }v\in\R\,,
	\end{equation}
	and
	\begin{equation}
		P(Y_\phi)=\mu+\lambda Y_\phi
		\qquad\text{almost surely}\,,
	\end{equation}
	where
	\[
		\lambda
		=\frac{\Cov(V,Y_\phi)}{\Var(Y_\phi)}
		=\frac{\tau}{2\sigma}\,.
	\]
\end{lemma}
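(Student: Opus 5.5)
The plan is to reduce the lemma to Theorem~\ref{thm:kyle}, or more precisely to Theorem~\ref{thm:rigidity}. Because $\phi$ only matters on a Lebesgue-null set through its values, modifying it there changes nothing in the market. First normalise exactly as in the proof of Theorem~\ref{thm:kyle}, using $\widehat V,\widehat U,\widehat\phi,\widehat P$. The change of variables preserves market efficiency and rescales profits by $\sigma\tau>0$. Affine changes of variable preserve Lebesgue-null sets. So $(\widehat\phi,\widehat P)$ is a quasi-equilibrium of the standardised model, and it suffices to prove that $\widehat\phi=\id$ almost everywhere there. By the version clause of Lemma~\ref{lem:F-properties}, the almost-everywhere maximisation condition reads: there is a set $A$ of full Lebesgue measure with $\phi(v)\in\argmax_x x(v-F(x))$ for every $v\in A$. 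Here $F$ is the smoothing of the continuous inverse regression. Lemma~\ref{lem:F-properties} needs only Borel measurability of $\phi$, so $F$ is $C^1$.

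Next I extend the maximisation to every $v$. Let $W(v)=\sup_x x(v-F(x))$ as before. $W$ is convex with values in $(-\infty,\infty]$, and it is finite on $A$. Its effective domain is an interval containing the dense set $A$, hence is all of $\R$. So $W$ is finite, convex and continuous. The supporting-line inequality \eqref{eq:supporting-line} holds for $v\in A$, and the argument of Lemma~\ref{lem:foc} then shows $\phi$ is nondecreasing on $A$. Define
\[
\widetilde\phi(v):=\inf\{\phi(w):w\in A,\ w>v\}\qquad\text{for every }v\in\R,
\]
the right limit along $A$. It is finite because $A$ is dense and $\phi$ is monotone on $A$, and it is nondecreasing, hence Borel. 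To see that $\widetilde\phi(v)$ is a maximiser, take $w_n\in A$ with $w_n\downarrow v$, so that $\phi(w_n)\to\widetilde\phi(v)$. Continuity of $F$ and of $W$ then gives
\[
\widetilde\phi(v)\bigl(v-F(\widetilde\phi(v))\bigr)=\lim_n\phi(w_n)\bigl(w_n-F(\phi(w_n))\bigr)=\lim_n W(w_n)=W(v).
\]
So $\widetilde\phi(v)$ attains the supremum.

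It remains to show that $\widetilde\phi=\phi$ almost everywhere. A monotone function on $A$ has at most countably many jumps, so $\widetilde\phi=\phi$ on $A$ minus a countable set, which is Lebesgue-almost everywhere. As a result $\widetilde\phi(V)=\phi(V)$ almost surely. The joint law of $(V,Y)$ is therefore unchanged, and so are the continuous inverse regression $P$ and its smoothing $F$. Hence $\widetilde\phi$ satisfies the pointwise maximisation condition of Theorem~\ref{thm:rigidity} with $F_{\widetilde\phi}=F$. That theorem gives $\widetilde\phi=\id$, so $\phi=\id$ almost everywhere.

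With $\phi=\id$ almost everywhere, $Y=V+U$ almost surely. Gaussian regression then gives $\E[V\mid Y]=Y/2$, and market efficiency yields the price formula. Undoing the normalisation gives $\phi(v)=\tfrac{\sigma}{\tau}(v-\mu)$ almost everywhere and $P(Y_\phi)=\mu+\tfrac{\tau}{2\sigma}Y_\phi$ almost surely, with $\lambda=\Cov(V,Y_\phi)/\Var(Y_\phi)$ by the Gaussian regression formula. The main obstacle is the extension step: showing that a maximiser exists at every $v\notin A$ and can be selected Borel-measurably. I plan to handle it via continuity of the convex $W$ together with the monotone right-limit construction. The concern is that $F$ is merely continuous, so the limiting argument must use only continuity of $F$ and not its monotonicity; monotonicity would require Lemma~\ref{lem:posterior}, which assumes everywhere maximisation.
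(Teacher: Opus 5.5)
Your proposal is correct. Its skeleton matches the paper's proof: replace $\phi$ by a Borel strategy that maximises at every $v$ and agrees with $\phi$ almost everywhere, note that this leaves $P$ and $F$ unchanged, and invoke the everywhere-maximisation uniqueness result. The difference lies in how the replacement is built.

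The paper works in the original coordinates, with $F(x)=\mu+\tau\widehat F(x/\sigma)$, and argues as follows:
\begin{enumerate}
\item Finiteness of $W$ at two points flanking $v$ shows that $x\mapsto x(v-F(x))$ tends to $-\infty$ as $|x|\to\infty$. Hence the argmax set $M(v)$ is nonempty and compact for every $v$.
\item It takes $\psi(v)=\min M(v)$, which is nondecreasing by increasing differences.
\item Every element of $M(v)$ is a subgradient of $W$, so $M(v)=\{W'(v)\}$ wherever $W$ is differentiable. This gives $\psi=\phi$ almost everywhere.
\item $(\psi,P)$ is then an equilibrium, and Theorem~\ref{thm:kyle} applies.
\end{enumerate}

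You instead take the right limit of $\phi$ along $A$. You get the maximising property at every $v$ by passing to the limit, using only continuity of $F$ and $W$, so no coercivity argument is needed. You get almost-everywhere agreement from countability of jumps, so you never appeal to differentiability of $W$. The jump claim holds because the intervals $(\phi(v),\widetilde\phi(v))$, $v\in A$, are pairwise disjoint. You then apply Theorem~\ref{thm:rigidity} in standardised coordinates.

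Your route is a little leaner. The paper's yields more structure: the argmax correspondence is nonempty and compact-valued everywhere and single-valued almost everywhere. Consequently every selection from it agrees almost everywhere with any almost-everywhere maximiser.

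Your selection is canonical as well. Since $\phi(w)$ is a subgradient of $W$ at $w$ for every $w\in A$, $\widetilde\phi$ is exactly the right derivative of $W$.

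Your closing caution is well placed. Both arguments use only continuity of $F$ and never need Lemma~\ref{lem:posterior}.
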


\begin{proof}[Proof of Lemma~\ref{lem:ae-maximisation}]
	Let $(\phi,P)$ be a quasi-equilibrium. Define
	\[
		\widehat V:=\frac{V-\mu}{\tau},
		\qquad
		\widehat U:=\frac{U}{\sigma},
		\qquad
		\widehat\phi(t):=\frac{\phi(\mu+\tau t)}{\sigma},
		\qquad
		\widehat P(y):=\frac{P(\sigma y)-\mu}{\tau}\,.
	\]
	Then $\widehat V$ and $\widehat U$ are independent standard normal
	random variables, and market efficiency gives
	\[
		\widehat P\bigl(\widehat\phi(\widehat V)+\widehat U\bigr)
		=\E[\widehat V\mid
			\widehat\phi(\widehat V)+\widehat U]
		\qquad\text{almost surely}\,.
	\]
	Define
	\[
		\widehat F(z):=\E[\widehat P(z+\widehat U)],
		\qquad z\in\R\,.
	\]
	The version clause of Lemma~\ref{lem:F-properties} shows that
	$\widehat P(z+\widehat U)\in L^1$ for every $z\in\R$ and that
	$\widehat F$ is continuous. Because
	\[
		P(x+U)
		=\mu+\tau\widehat P\!\left(\frac{x}{\sigma}+\widehat U\right)\,,
	\]
	it follows that $P(x+U)\in L^1$ for every $x\in\R$ and that
	\[
		F(x):=\E[P(x+U)]
		=\mu+\tau\widehat F\!\left(\frac{x}{\sigma}\right)\,,
		\qquad x\in\R\,,
	\]
	is continuous. Consequently, for every $v,x\in\R$,
	\[
		\E\bigl[(v-P(x+U))x\bigr]
		=x\bigl(v-F(x)\bigr)\,.
	\]
	The almost-everywhere profit-maximisation condition now gives
	\[
		\phi(v)\in\argmax_{x\in\R}x\bigl(v-F(x)\bigr)
		\qquad\text{for Lebesgue-almost every }v\in\R\,.
	\]

	Set
	\[
		f(x,v):=x\bigl(v-F(x)\bigr),
		\qquad
		W(v):=\sup_{x\in\R}f(x,v)\,.
	\]
	The function $W$ is convex as the pointwise supremum of affine
	functions of $v$, and $W\geq0$ because $x=0$ is admissible. Let
	$A\subseteq\R$ be a set of full Lebesgue measure on which
	$\phi(v)$ attains the supremum. Then
	\[
		W(v)=f(\phi(v),v)<\infty
		\qquad\text{for every }v\in A\,.
	\]
	Fix $v\in\R$, and choose $a,b\in A$ with $a<v<b$. Convexity gives
	\[
		W(v)
		\leq
		\frac{b-v}{b-a}W(a)
		+
		\frac{v-a}{b-a}W(b)
		<\infty\,.
	\]
	Thus, $W$ is finite on $\R$ and hence continuous.

	For each $v\in\R$, define
	\[
		M(v):=\argmax_{x\in\R}f(x,v)\,.
	\]
	We show that $M(v)$ is nonempty and compact. Choose $a<v<b$. For
	$x\geq0$,
	\[
		f(x,v)
		=f(x,b)-x(b-v)
		\leq W(b)-x(b-v)\,,
	\]
	whereas, for $x\leq0$,
	\[
		f(x,v)
		=f(x,a)+x(v-a)
		\leq W(a)+x(v-a)\,.
	\]
	It follows that $f(x,v)\to-\infty$ as $|x|\to\infty$. Because
	$x\mapsto f(x,v)$ is continuous, it attains its maximum. The same
	limit makes the set of maximisers bounded, while continuity makes it
	closed. Thus, $M(v)$ is nonempty and compact.

	Let $v_1<v_2$ and choose $x_i\in M(v_i)$. Optimality gives
	\[
		f(x_1,v_1)\geq f(x_2,v_1)
		\qquad\text{and}\qquad
		f(x_2,v_2)\geq f(x_1,v_2)\,.
	\]
	Adding these inequalities yields
	\[
		(x_2-x_1)(v_2-v_1)\geq0\,,
	\]
	so $x_1\leq x_2$. Therefore, the function
	\[
		\psi(v):=\min M(v),
		\qquad v\in\R\,,
	\]
	is nondecreasing and hence Borel measurable.

	For every $v\in\R$ and $x\in M(v)$,
	\[
		W(w)
		\geq f(x,w)
		=W(v)+x(w-v)
		\qquad\text{for every }w\in\R\,.
	\]
	Thus, every element of $M(v)$ is a subgradient of $W$ at $v$.
	At every point where $W$ is differentiable, its subgradient is the
	singleton $\{W'(v)\}$; because $M(v)$ is nonempty, this implies
	\[
		M(v)=\{W'(v)\}\,.
	\]
	A finite convex function is differentiable Lebesgue-almost everywhere.
	Intersecting this full-measure differentiability set with $A$ gives
	\[
		\psi(v)=\phi(v)
		\qquad\text{for Lebesgue-almost every }v\in\R\,.
	\]
	By construction,
	\[
		\psi(v)\in\argmax_{x\in\R}x\bigl(v-F(x)\bigr)
		\qquad\text{for every }v\in\R\,.
	\]

	Because the law of $V$ is absolutely continuous and
	$\psi=\phi$ Lebesgue-almost everywhere,
	\[
		\psi(V)=\phi(V)
		\qquad\text{and}\qquad
		Y_\psi=Y_\phi
		\qquad\text{almost surely}\,.
	\]
	Hence market efficiency gives
	\[
		P(Y_\psi)=\E[V\mid Y_\psi]
		\qquad\text{almost surely}\,.
	\]
	Moreover, for every $v,x\in\R$, the payoff representation and the
	pointwise optimality of $\psi(v)$ give
	\[
		\begin{aligned}
			\E\bigl[(v-P(\psi(v)+U))\psi(v)\bigr]
			 & =f(\psi(v),v)                  \\
			 & \geq f(x,v)                    \\
			 & =\E\bigl[(v-P(x+U))x\bigr] \,.
		\end{aligned}
	\]
	Thus, $(\psi,P)$ is an equilibrium. Theorem~\ref{thm:kyle} gives
	\[
		\psi(v)=\frac{\sigma}{\tau}(v-\mu)
		\qquad\text{for every }v\in\R
	\]
	and
	\[
		P(Y_\psi)=\mu+\frac{\tau}{2\sigma}Y_\psi
		\qquad\text{almost surely}\,.
	\]
	Because $\psi=\phi$ Lebesgue-almost everywhere and
	$Y_\psi=Y_\phi$ almost surely, these are exactly the two conclusions
	of the lemma. The covariance formula for $\lambda$ follows from the
	corresponding formula in Theorem~\ref{thm:kyle} and the almost-sure
	equality of $Y_\psi$ and $Y_\phi$.
\end{proof}

\makeatletter
\NAT@openbibfalse
\makeatother
\bibliographystyle{plainnat}
\bibliography{kyle}
\end{document}